\theoremstyle{plain}
\newtheorem{theorem}{Theorem}[section]
\newtheorem{cor}[theorem]{Corollary}
\newtheorem{prop}[theorem]{Proposition}
\theoremstyle{definition}
\newtheorem{examp}[theorem]{Example}
\newtheorem{examps}[theorem]{Examples}
\newtheorem{dfn}[theorem]{Definition}
\newtheorem{rem}[theorem]{Remark}
\newtheorem{rems}[theorem]{Remarks}
\theoremstyle{remark}
\providecommand*{\twoheadrightarrowfill@}{%
  \arrowfill@\relbar\relbar\twoheadrightarrow
}
\providecommand*{\twoheadleftarrowfill@}{%
  \arrowfill@\twoheadleftarrow\relbar\relbar
}
\providecommand*{\xtwoheadrightarrow}[2][]{%
  \ext@arrow 0579\twoheadrightarrowfill@{#1}{#2}%
}
\providecommand*{\xtwoheadleftarrow}[2][]{%
  \ext@arrow 5097\twoheadleftarrowfill@{#1}{#2}%
}
\def\ind{\@ifnextchar[{\@with}{\@without}}
\def\@with[#1]#2{\mathrm{Ind}(#1,#2)}
\def\@without#1{\mathrm{Ind}(#1)}
\newcounter{para}[section]
\newcommand\KK[0]{K\! K}
\DeclareMathOperator\dnc{DNC}
\newcommand{\R}{\mathbb{R}}
\begin{document}
\title{On the deformation groupoid of the inhomogeneous pseudo-differential Calculus}
\author{Omar Mohsen}
\date{}
\maketitle
\abstract{Recently, Van-Erp and Yuncken and independently Choi and Ponge defined an inhomogeneous deformation groupoid. As shown by Van-Erp and Yuncken, this deformation groupoid allows to fully recovers the general inhomogeneous pseudo-differential calculus.

In this article we simplify and generalise this construction using a double (multiple) deformation to the normal cone.}



\section*{Introduction} In order to construct a parametrix for Hörmander's \cite{MR0222474} subelliptic operators on a contact manifold, Folland and Stein \cite{MR0344699,MR0367477} defined a noncommutative pseudo-differential calculus where the principal cosymbol is a function on a bundle of Heisenberg groups. A fundamental characteristic of this pseudo-differential calculus is that a vector field defines a differential operator of order $1$ if it is everywhere tangent to the contact subbundle and of order $2$ if not. Later on, this was generalised to an arbitrary subbundle of the tangent bundle, and even further to a filtration of the tangent bundle under conditions on the Lie bracket (see \cite{MR0493005,MR0370271,MR660648,MR973272,MR1185817,MR764508,MR0423432,MR953082,MR0442149,MR0436223}). To such a structure one associates a bundle of graded nilpotent Lie groups over which the cosymbols are functions. Let us remark that the general situation is more involved because the bundle of graded nilpotent Lie groups doesn't need to be locally trivial and the analogue of the theorem of Darboux doesn't hold in general.

This calculus was later used by many authors in index theory and $C^*$-algebras, for instance by Connes and  Moscovici \cite{MR1657389,MR1334867} to define a transversal signature operator on foliated manifolds and do computations in cyclic cohomology, following a construction of Hilsum and Skandalis \cite{MR925720}, and by Julg and Kasparov \cite{MR1332908} to compute the $SU(n,1)$ equivariant $\KK$-theory following the work of Rumin  \cite{MR1267892}.

In \cite{MR3187645}, Debord and Skandalis gave a global definition of the classical pseudo-differential calculus thanks to the tangent groupoid of Connes. This definition was adapted to the case of inhomogeneous calculus by Van-Erp and Yuncken  \cite{Erp:2015aa}. To this end they used an inhomogeneous deformation groupoid\footnote{Following Ponge's recommendation, the inhomogeneous deformation groupoid will be called the Carnot groupoid.} instead of Connes's tangent groupoid. This inhomogeneous groupoid was constructed in the case of contact manifolds by Ponge and van-Erp independentaly \cite{MR2247877,MR2680395}, and in the general case of filtrations by Choi and Ponge \cite{Choi:2015aa,Choi:2017aa,Ponge:2017aa} and van Erp and Yuncken \cite{Erp:2016aa} following work by Julg and van Erp \cite{MR3694098}.

 This groupoid was also used by van Erp \cite{MR2680396,MR2680395} and later (with Baum \cite{MR3261009}) to formulate and prove an index formula in the same spirit as that of Atiyah-Singer. Their index theorem is for differential operators whose cosymbol is invertible in the above calculus associated to a contact structure. These operators are necessarily hypoelliptic, hence their analytic index is well defined but they are rarely elliptic. It was also used by van Erp \cite{MR2746652} to formulate and prove an index theorem for hypoelliptic operators on foliated manifolds.

In the present article, we give an elementary construction of this Carnot groupoid using the deformation to the normal cone construction. Our approach gives rise to noncommutative Lie groupoids/symbols precisely because we deform Lie groupoids with respect to subgroupoids and not with respect to spaces, and contrary to the methods used in \cite{MR2247877,MR2680395,MR2417549,Choi:2015aa,Choi:2017aa,Ponge:2017aa,Erp:2016aa} no analysis on local coordinates is needed to construct the Lie groupoid, only functoriality of the $\dnc$ construction, furthermore no analysis on higher jets is needed to construct the Lie groupoid.

\smallskip

We now briefly describe our construction.

 Let us first recall the deformation to the normal cone construction. If $V\subseteq M$ is submanifold, then the set $\dnc(M,V)=M\times\R^*\sqcup \mathcal{N}^M_V\times\{0\}$ admits a natural smooth structure, where $\mathcal{N}^M_V$ is the normal bundle. It then follows from the functoriality of the construction that if $H\rightrightarrows H^0$ is a Lie subgroupoid of $G\rightrightarrows G^0$, then $\dnc(G,H)$ is naturally a Lie groupoid over $\dnc(G^0,H^0)$. Connes's tangent groupoid is then the groupoid $\dnc(M\times M,M)\rightrightarrows M\times \R$.

Our construction of the inhomogenuous deformation groupoid is as follows. Let $H\subseteq TM$ be a vector bundle. Recall the tangent groupoid $$\dnc(M\times M,M)=M\times M\times \R^*\sqcup TM\times \{0\}\rightrightarrows M\times \R$$ defined by Connes. The space $H\times \{0\}\subseteq TM\times \{0\}\subseteq\dnc(M\times M,M)$ is a Lie subgroupoid. Hence by the Functoriality of the $\dnc$ construction, the space $$\dnc(\dnc(M\times M,M),H\times\{0\})\rightrightarrows \dnc(M\times \R,M\times \{0\})=M\times \R^2$$ is a Lie groupoid. We prove that the fiber over $M\times \{1\}\times \R$ is the Canot Lie groupoid. Furthermore the groupoid  $\dnc(\dnc(M\times M,M),H\times\{0\})\rightrightarrows M\times \R^2$ is a quite natural object to study because it contains `the deformations in all the directions'. In the case of a $2$-step filtration $H^1\subseteq H^2\subseteq TM$ with the hypothesis $[H^1,H^1]\subseteq H^2$, we construct a Lie groupoid as follows: by the previous construction with $H=H^1$, we have a Lie groupoid $$M\times M\times \R^*\sqcup H^1\oplus TM/H^1\times\{0\},$$ with a nilpotent group structure on $H^1\oplus TM/H^1$. The condition $[H^1,H^1]\subseteq H^2$ is then precisely the condition needed for $$H^1\oplus H^2/H^1\times\{0\}\subseteq M\times M\times \R^*\sqcup H^1\oplus TM/H^1\times\{0\}$$ to be a Lie subgroupoid. Hence by functoriality of $\dnc$ the space \begin{align*}
 \dnc( M\times M\times \R^*\sqcup H^1\oplus TM/H^1\times\{0\},H^1\oplus H^2/H^1\times\{0\})\rightrightarrows M\times \R^2
\end{align*} is a Lie groupoid. We restrict to $M\times \{1\}\times \R$ to obtain the deformation groupoid associated to the filtration. The general case is then treated in \cref{Heisenberg groupoid sect} by induction.

The methods developed here can be used to give a variety of examples of Lie groupoids which can be used to define an associated inhomogenuous pseudo-differential calculi in a variety of geometric situations. In particular we extend the Carnot Lie groupoid to cover the case of transverse (to a foliation) hypoelliptic pseudo-differential calculus without any difficulty (examples \ref{example H foliation single} and \ref{examp foliation multipleHeisen}).

\smallskip

This article is organised as follows.  

In \Cref{Premlim sect}, some preliminaries are recalled.

In \Cref{dnc section}, we recall the notion of the deformation to the normal cone following \cite{PCaro, Debord:2017aa}.

 In \Cref{Dnc iterated sect}, the iterated deformation to the normal cone construction is introduced.
 
In \Cref{prem sect prop}, a proposition is proved which will be used in \Cref{M single H examp}, in order to give us the algebraic structure of the symbol part of the Carnot groupoid.
 
\smallskip 

In \Cref{sect case 1} the case of a single bundle is treated thoroughly.

In \Cref{M single H examp}, we define the Carnot groupoid and we calculate the Lie groupoid structure of the symbol part of Carnot groupoid.

  In \Cref{sect H bundle 1 case}, as the construction in \cite{Choi:2015aa,Ponge:2017aa,Choi:2017aa,Erp:2016aa} of are based on local charts, we describe our construction locally, and show that the two construction agree.
 
  \smallskip

  In \Cref{Heisenberg groupoid sect}, we generalize the construction given in \Cref{M single H examp} but for a filtration of the tangent bundle proving that iterated deformation to the normal cone gives rise to the Carnot groupoid in the general case. This section is independent of \Cref{M single H examp} and provides another proof of \Cref{main thm H single}.
  
  \smallskip
  
  The paper ends with a paragraph on a related construction of Sadegh and Higson \cite{Sadegh:2016aa} seen here as the quotient of a Lie groupoid by a Lie subgroupoid.
\section*{Acknowledgments}
I wish to express my gratitude to my thesis director, G. Skandalis, for his support and his numerous insights and remarks into this work. I would also like to thank Prof. Higson and Prof. Ponge for their kind remarks and recommendations. This work was supported by grants from Région Ile-de-France.
\section{Preliminaries}\label{Premlim sect}
\subsection{Deformation to the normal cone construction}\label{dnc section}
In this section, we recall the deformation to the normal cone construction following \cite{PCaro, Debord:2017aa}. \textit{The deformation to the normal cone}\index{deformation to the normal cone} of a manifold $M$ along an immersed submanifold $V$ is a manifold whose underlying set is $$\dnc(M,V):=M\times \R^*\sqcup N^M_V\times\{0\},$$ where $N^M_V$ is the normal bundle of $V$ inside $M$. The smooth structure is defined by covering $\dnc(M,V)$ with two sets. The first is $M\times \R^*$. The second is $\phi(N^M_V)\times\R^*\sqcup N^M_V\times\{0\}$ where $\phi:N^M_V\to M$ is a tubular embedding.\footnote{To simplify the exposition, we will always assume that tubular neighbourhoods are diffeomorphisms on $N^M_V$. In the case of immersed manifolds, the tubular neighbourhoods are only local in $V$.} The smooth structure on $\phi(N^M_V)\times\R^*\sqcup N^M_V\times\{0\}$ is given by declaring the following bijection a diffeomorphism \begin{align*}
&\tilde{\phi}:N^M_V\times \R\to \phi(N^M_V)\times\R^*\sqcup N^M_V\times\{0\}\\
 &\tilde{\phi}(x,X,t)=(\phi(x,tX),t)\in M\times \R^*,\quad t\neq 0\\
 &\tilde{\phi}(x,X,0)=(x,X,0)\in N^M_V\times\{0\}.
\end{align*}
This smooth structure is independent of $\phi$. This follows by noticing that the following functions are smooth functions that generate the smooth structure:\begin{enumerate}
\item the function\begin{align*}
(\pi_M,\pi_\R):\dnc(M,V)&\to M\times \R\\
(x,t)&\to (x,t),\quad t\neq 0\\
(x,X,0)&\to (x,0)
\end{align*}
\item Let $f\in C^\infty(M)$ be a smooth function which vanishes on $V$. Therefore $df:N^M_V\to \R$ is well defined. The following function is smooth
\begin{align*}
\dnc(f):\dnc(M,V)&\to \R\\
(x,t)&\to \frac{f(x)}{t},\quad t\neq 0\\
(x,X,0)&\to df_x(X)
\end{align*}
\end{enumerate}

The group $\R^*$ acts smoothly on $\dnc(M,V)$. The action is given by $\lambda_{u}(x,t)=(x,ut)$ and $\lambda_u(x,X,0)=(x,\frac{X}{u},0)$ for $u\in \R^*$. 

\bigskip

\begin{prop}[Functoriality of DNC]\label{Functoriality of DNC}
Let  $M,M'$ be smooth manifolds, $V\subseteq M$, $V'\subseteq M'$ submanifolds, $f:M\to M'$ a smooth map such that $f(V)\subseteq V'$. Then the map defined by 
\begin{align*}
\dnc(M,V)&\to \dnc(M',V')\\
(x,t)&\to (f(x),t),\quad t\neq 0\\
(x,X,0)&\to (f(x),df_x(X),0)
\end{align*}
is a smooth map\footnote{In the case where $V'$ is an immersed submanifold, one must also suppose that $f_{|V}:V\to V'$ is continuous.} that will be denoted by $\dnc(f)$. 
 Furthermore the map  $\dnc(f)$ is \begin{itemize}
\item  a submersion if and only if $f$ is a submersion and $f_{|V}:V\to V'$ is also a submersion.
 \item an immersion if and only if $f$ is an immersion and for every $v\in V$, $T_vV=df_v^{-1}(TV').$
 \end{itemize}
\end{prop}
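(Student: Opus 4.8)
The plan is to treat well-definedness and smoothness using the generating functions produced in \Cref{smooth structure dnc construction}, and then to read the submersion/immersion criteria off the differential of $\dnc(f)$, which is transparent for $t\neq0$ and degenerates in a controlled way at $t=0$.

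First, the $t=0$ formula makes sense: since $f(V)\subseteq V'$ we have $df_v(T_vV)\subseteq T_{f(v)}V'$ for $v\in V$, so $df_v$ descends to a linear map $\overline{df}_v\colon N^M_V|_v\to N^{M'}_{V'}|_{f(v)}$, and $(x,X,0)\mapsto(f(x),\overline{df}_x(X),0)$ is exactly the total space of the induced bundle morphism $\overline{df}\colon N^M_V\to N^{M'}_{V'}$ over $f_{|V}$. For smoothness I would avoid the tubular charts and instead use that the functions $(\pi_{M'},\pi_\R)$ and $\dnc(\varphi)$, for $\varphi\in C^\infty(M')$ with $\varphi_{|V'}=0$, generate the smooth structure of the target: it then suffices to check that each composite with $\dnc(f)$ is smooth. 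The composite with $(\pi_{M'},\pi_\R)$ equals $(f\times\id_\R)\circ(\pi_M,\pi_\R)$, and a direct check from the definitions (using the chain rule on the $t=0$ component) gives $\dnc(\varphi)\circ\dnc(f)=\dnc(\varphi\circ f)$; since $\varphi\circ f$ vanishes on $V$, this is one of the generating functions of $\dnc(M,V)$, hence smooth.

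For the last two assertions I would split according to $t$. On $\{t\neq0\}=M\times\R^*$ the map is just $f\times\id_{\R^*}$, so there $\dnc(f)$ is a submersion (resp.\ immersion) exactly where $f$ is; in particular this already forces $f$ to be a submersion (resp.\ immersion) on all of $M$. The whole difficulty is the behaviour at $t=0$. Fixing $v\in V$, choosing submanifold charts with $V=\{z=0\}$ and $V'=\{z'=0\}$, and writing $f=(f_1,f_2)$ with $f_2(y,0)\equiv0$, Hadamard's lemma presents $\dnc(f)$ in the induced $\dnc$ charts as the smooth map $(y,z,t)\mapsto\bigl(f_1(y,tz),\int_0^1\frac{\partial f_2}{\partial z}(y,stz)z\,ds,\,t\bigr)$. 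Differentiating at $(y,z,0)$ gives a block matrix of the shape
\[
J=\begin{pmatrix}G&0&\ast\\\ast&A&\ast\\0&0&1\end{pmatrix},\qquad G=d(f_{|V})_v,\quad A=\overline{df}_v,
\]
whose starred entries involve $z$ and second derivatives of $f$.

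The final step is to read off the rank of $J$ and translate. Testing at the zero section $z=0$ the starred entries vanish and $J$ becomes block diagonal, so $\dnc(f)$ is a submersion (resp.\ immersion) there iff $G$ and $A$ are surjective (resp.\ injective); conversely, surjectivity (resp.\ injectivity) of $G$ and $A$ makes $J$ surjective (resp.\ injective) for every $z$, the triangular shape ensuring the starred terms never obstruct. Since global submersivity/immersivity must hold in particular at the zero section, the binding condition along $t=0$ is exactly that $G=d(f_{|V})_v$ and $A=\overline{df}_v$ be surjective (resp.\ injective) for all $v\in V$. Combining with the $t\neq0$ part and the elementary facts that a quotient of a surjection is a surjection and a restriction of an injection is an injection, one sees that, given $f$ a submersion, ``$A$ surjective'' is automatic so the remaining constraint is ``$G$ surjective'', i.e.\ $f_{|V}$ a submersion; while, given $f$ an immersion, ``$G$ injective'' is automatic so the remaining constraint is ``$A$ injective'', i.e.\ $\ker\overline{df}_v=0$, which is precisely $T_vV=df_v^{-1}(T_{f(v)}V')$. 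The main obstacle is this $t=0$ analysis: producing the smooth expression for $\dnc(f)$ near the zero fibre and verifying that only the zero section, the most degenerate locus, contributes genuine constraints.
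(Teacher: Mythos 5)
Your proposal is correct, and while the smoothness part follows the paper, the rank statements are proved by a genuinely different mechanism. For smoothness you do exactly what the paper intends: test against the generating functions of \Cref{smooth structure dnc construction}, and your identity $\dnc(\varphi)\circ\dnc(f)=\dnc(\varphi\circ f)$ is precisely the detail the paper leaves implicit in ``follows from the description of smooth maps given above.'' For the submersion/immersion criteria, however, the paper never writes $\dnc(f)$ in charts: it considers the locus $U\subseteq\dnc(M,V)$ where $d\dnc(f)$ has maximal rank, notes that $U$ is open, $\R^*$-invariant and contains $M\times\R^*$, and—since every $\R^*$-orbit in $N^M_V\times\{0\}$ accumulates on the zero section—reduces everything to the points $(v,0)$, $v\in V$, where $T_{(v,0)}\dnc(M,V)=\R\oplus T_vV\oplus T_vM/T_vV$ and the differential is the block-diagonal map $(t,X,Y)\mapsto(t,df_v(X),\overline{df}_v(Y))$. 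You instead compute the full Jacobian at \emph{every} point $(y,z,0)$ of the exceptional fibre, via submanifold charts and Hadamard's lemma, and verify by hand that the starred second-order entries never obstruct, thanks to the triangular shape; this plays exactly the role that $\R^*$-invariance plays in the paper. Both arguments end with the same linear algebra at the zero section, and your translations (a quotient of a surjection is surjective; $\ker\overline{df}_v=df_v^{-1}(TV')/T_vV$, so injectivity of $\overline{df}_v$ is the condition $T_vV=df_v^{-1}(TV')$) are correct. What each buys: the paper's trick is shorter, coordinate-free, and is reused verbatim in the proof of \Cref{DNC fibered product}, where your pointwise computation would have to be redone; your route is more elementary, produces an explicit local formula for $\dnc(f)$ near the exceptional fibre (re-proving smoothness in charts as a by-product), and shows concretely which terms of $d\dnc(f)$ degenerate along $t=0$.
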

\begin{proof}
Smoothness of $\dnc(f)$ follows from the description of smooth maps given above. For statements concerning submersions and immersions. Let $U\subseteq \dnc(M,V)$ be the set where the differential of $\dnc(f)$ is onto (respectively injective). It is clear that $U$ is an open set that is invariant under the $\R^*$ action and contains $M\times\R^*$. To prove that $U=\dnc(M,V)$, it suffices to prove that $V\times \{0\}\subseteq U.$ If $v\in V$, then one sees directly that $$T_{(v,0)}\dnc(M,V)=\R\oplus T_vV\oplus T_vM/T_vV$$ The differential of $\dnc(f)$ is then $d\dnc(f)_{(v,0)}(t,X,Y)=(t,df_v(X),df_v(Y)).$ The proposition is then clear.
\end{proof}
The map \begin{align*}
 N^M_V\to N^{M'}_{V'},\quad (x,X)&\to (f(x),df_x(X)) 
\end{align*} will be denoted by $Nf$.

\begin{rem}\label{rem nonhaus} More generally if $V$ is a smooth manifold, $i:V\to M$ is an immersion but not necessarily injective, then the manifold $\dnc(M,V)$ can still be defined using the same charts as above. The main difference is that the manifold $\dnc(M,V)$ is no longer Hausdorff.
\end{rem}
\begin{rem}\label{action dnc}It follows from \Cref{Functoriality of DNC}, that if $G$ is a Lie group acting smoothly on a manifold $M$ that leaves a submanifold $V$ invariant, then $G$ acts smoothly on $\dnc(M,V)$. This action commutes with the $\R^*$ action $\lambda$. In particular the group $G\times \R^*$ acts on $\dnc(M,V).$
\end{rem}
\begin{prop}\label{DNC fibered product}
Let $M_1, M_2, M$ be manifolds, $V_i\subseteq M_i,V\subseteq M$ submanifolds, $f_i:M_i\to M$ smooth maps such that \begin{enumerate}
\item $f_i(V_i)\subseteq V$ for $i\in \{1,2\}$
\item the maps $f_i$ are transverse
\item the maps $f_{i}|V_i:V_i\to V$ are transverse
\end{enumerate} Then \begin{enumerate}
\item\begin{enumerate}[label=(\alph*)]
\item the maps $Nf_i:N^{M_i}_{V_i}\to N^M_V$ are transverse.
\item the natural map $$N^{M_1\times_MM_2}_{V_1\times_VV_2}\to N^{M_1}_{V_1}\times_{N^{M}_{V}}N^{M_1}_{V_2}$$ is a diffeomorphism.

\end{enumerate} 
Similarly for $\dnc$, we have \item\begin{enumerate}[label=(\alph*)]
 \item the maps $\dnc(f_i):\dnc(M_i,V_i)\to \dnc(M,V)$ are transverse.
 \item the natural map $$\dnc(M_1\times_MM_2,V_1\times_VV_2)\to \dnc(M_1,V_1)\times_{\dnc(M,V)}\dnc(M_2,V_2)$$ is a diffeomorphism.
\end{enumerate}

\end{enumerate}
\end{prop}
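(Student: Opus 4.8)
The plan is to reduce each of the four assertions to a statement about tangent spaces, where it becomes elementary linear algebra about quotients and fibered products of vector spaces; the smoothness of all four natural maps is free from \Cref{Functoriality of DNC}, so only transversality, bijectivity, and bijectivity of differentials need to be checked. I would begin with (1a). Fix $p_i=(v_i,W_i)\in N^{M_i}_{V_i}$ with $Nf_1(p_1)=Nf_2(p_2)=q=(v,W)$, so $f_i(v_i)=v$. Viewing each normal bundle as a manifold over its base $\pi_i\colon N^{M_i}_{V_i}\to V_i$, there is a short exact sequence
\[0\to (N^{M_i}_{V_i})_{v_i}\to T_{p_i}N^{M_i}_{V_i}\xrightarrow{d\pi_i} T_{v_i}V_i\to 0,\]
and $Nf_i$ is a bundle map covering $f_i|_{V_i}$, so it fits into a ladder whose bottom row is the analogous sequence for $N^M_V$ at $q$. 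On the sub-objects the induced map is $\overline{d(f_i)}_{v_i}\colon T_{v_i}M_i/T_{v_i}V_i\to T_vM/T_vV$, and these span $T_vM/T_vV$ for $i=1,2$ because $f_1,f_2$ are transverse; on the quotients it is $d(f_i|_{V_i})_{v_i}$, and these span $T_vV$ because $f_1|_{V_1},f_2|_{V_2}$ are transverse. A diagram chase through the ladder then gives $\im d(Nf_1)_{p_1}+\im d(Nf_2)_{p_2}=T_qN^M_V$, which is transversality of $Nf_1,Nf_2$.

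Granting (1a), the target in (1b) is a manifold, and both source and target are smooth vector bundles over $V_1\times_VV_2$, with the natural map a bundle morphism covering the identity; so it is enough to check it is a fibrewise linear isomorphism. Over a point this is the purely algebraic assertion that, whenever $C_i\to D$ are jointly surjective and sit inside jointly surjective maps $A_i\to B$, the canonical map
\[(A_1\times_BA_2)/(C_1\times_DC_2)\to (A_1/C_1)\times_{B/D}(A_2/C_2)\]
is an isomorphism; injectivity and surjectivity follow by short chases from the two joint-surjectivity hypotheses. A smooth fibrewise-linear bijection between vector bundles over a common base is a diffeomorphism, which settles (1b).

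For (2a) I would use that the action of \Cref{action dnc} commutes with each $\dnc(f_i)$ and acts diagonally on $\dnc(M_1,V_1)\times\dnc(M_2,V_2)$, so the locus of transversal pairs is open and $\R^*$-invariant. At pairs with $t\neq 0$ the maps are $f_i\times\id_{\R^*}$ and transversality is immediate from that of $f_1,f_2$; since the transversal locus is open and $\R^*$-invariant and scaling drives any $t=0$ pair towards the zero section, it remains only to verify transversality at zero-section pairs. There, as in the proof of \Cref{Functoriality of DNC}, $T_{(v_i,0)}\dnc(M_i,V_i)=\R\oplus T_{v_i}V_i\oplus T_{v_i}M_i/T_{v_i}V_i$ and $d\dnc(f_i)_{(v_i,0)}(t,X,Y)=(t,d(f_i)_{v_i}(X),\overline{d(f_i)}_{v_i}(Y))$, so joint surjectivity decomposes into the trivial $\R$-summand and precisely the two summands already treated in (1a).

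Finally, for (2b), the projections $\pi_i\colon M_1\times_MM_2\to M_i$ give smooth maps $\dnc(\pi_i)$ by \Cref{Functoriality of DNC} that agree after $\dnc(f_i)$ by functoriality, so the universal property of the fibered product produces the natural smooth map. On the stratum $t\neq 0$ it is the tautological identification $(M_1\times_MM_2)\times\R^*=(M_1\times\R^*)\times_{M\times\R^*}(M_2\times\R^*)$, and on $t=0$ it is the map of (1b); hence it is a bijection. Its differential at a zero-section point with $t=0$ splits, using \Cref{action dnc} to reduce to the zero section and the tangent description above, as $\id_\R$ plus the identity on $T(V_1\times_VV_2)$ plus the isomorphism of (1b), so it is an isomorphism, and a smooth bijective local diffeomorphism is a diffeomorphism. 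The main obstacle throughout is the $t=0$ analysis of (2): correctly identifying the tangent spaces of $\dnc$ along the zero section and arranging the $\R^*$-equivariant reduction so that both statements collapse onto the linear computations of (1a) and (1b).
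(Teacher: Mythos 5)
Your proof is correct and follows essentially the same route as the paper's: the paper likewise reduces the two $\dnc$ statements to the normal-bundle statements 1.(a) and 1.(b) by noting that transversality and invertibility of the differential are open, $\R^*$-invariant conditions already valid on the $t\neq 0$ locus, so they need only be checked along the zero section over $V_1\times_V V_2$, exactly as in the proof of \Cref{Functoriality of DNC}. The only difference is that you spell out the linear algebra behind 1.(a) and 1.(b) (the ladder of exact sequences and the fibrewise quotient computation), which the paper simply declares clear.
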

\begin{proof}
Let us prove $1.$ $(a)$. The group $\R^*$ actson $N^M_{V_i}$ and $N^M_V$. Both $Nf_i$ are $\R^*$ equivariant. Since transversality is an open condition it follows that it suffices to check transversaility at the origin. In that case for $x_1\in V_1$, $x_2\in V_2$ such that $f_1(x_1)=f_2(x_2)$, one has $$T_{(x_i,0)}N^M_{V_i}=T_{x_i}V_i\oplus N^M_{V_i,x_i}.$$ Transversality of $f_i$ at $(x_i,0)$ follows from condition $2$ and $3$.

Statement $1.$ $(b)$ and bijectivity of the natural map $$\dnc(M_1\times_MM_2,V_1\times_VV_2)
 \to \dnc(M_1,V_1)\times_{\dnc(M,V)}\dnc(M_2,V_2)$$ directly follow from statement $1.$ $(a)$. To prove that it is a diffeomorphism and that the maps $\dnc(f_i)$ are transverse, we use the same argument as in \Cref{Functoriality of DNC}. The two conditions are open conditions which are $\R^*$-invariant. Hence it suffices to check that them at $V_1\times_V V_2$ which follows directly from $1.$ $(a)$ and $1.$ $(b)$.
\end{proof}
\begin{prop}\label{last min prop}Let $V\subseteq M$ a submanifold, then \begin{enumerate}
 \item $TN^M_V=N^{TM}_{TV}$ 
 \item if $\pi_\R:\dnc(M,V)\to \R$ is the natural projection, then $\ker(d\pi_\R)=\dnc(TM,TV).$
\end{enumerate}
\end{prop}
\begin{proof}
We will define a diffeomorphism $\phi:TN^M_V\to N^{TM}_{TV}$. Let $c(t,s):\R^2\to M$ be a smooth map such that $c(t,0)\in V$ for all $t$. It follows that for each fixed $t$, $\partial_s c(t,0)\in N^M_V$, and hence $\partial_t\partial_s c(0,0)\in TN^M_V.$ Conversely for each fixed $s$, $\partial_t c(0,s)\in TM$ and its value at $s=0$ is in $TV$, hence $\partial_s\partial_t c(0,0)\in N^{TM}_{TV}$. The map $\phi$ is the map sending $\partial_t\partial_s c(0,0)$ to $\partial_s\partial_t c(0,0)$, for each path $c$. One easily checks in local coordinates this $\phi$ is indeed a diffeomorphism.

For the deformation to the normal cone, by definition $\ker(d\pi_\R)=TM\times \R^*\sqcup TN^M_V.$ Using the map $\phi$, one defines a map $\ker(d\pi_\R)\to \dnc(TM,TV).$ It is straightforward to check that the resulting map is a difffeomorphism by checking so in local coordinates.
\end{proof}
Let us recall the notion of a $\mathcal{VB}$-groupoid from \cite{MR941624,MR896907}.
\begin{dfn}\label{dfn of VB groupoids}Let $H$ be a Lie groupoid. A $\mathcal{VB}$-groupoid over $H$ is given by \begin{itemize}
\item a vector bundle $G$ over $H$
\item a vector bundle $G^0$ over $H^0$
\item a Lie groupoid structure on $G$ whose space of objects is $G^0$, such that the map range map $r:G\to G^0$, the inverse map $i:G\to G$, the multiplication map $m:G\times_{s,r}G\to G$ are respectively bundle maps over the range map $r:H\to H^0$, the inverse map $i:H\to H$ and the multiplication map $m:H\times_{s,r}H\to H$.
\end{itemize}
By abuse of notation we will call $G\rightrightarrows G^0$ a $\mathcal{VB}$-groupoid over $H$.

A $\mathcal{VB}$-subgroupoid of $G$ is a Lie subgroupoid $K\rightrightarrows K^0$ of $G$ such that $K$ is a subbundle of $G$ and $K^0$ is a subbundle of $G^0$. 
\end{dfn}
\begin{theorem}\label{Construction of dnc grouopids}Let $G$ be a Lie groupoid, $H$ a Lie subgroupoid. Then
\begin{enumerate}
\item the space $N^G_H\rightrightarrows N^{G^0}_{H^0}$ is a Lie groupoid whose structure maps are $Ns$, $Nr$ and whose Lie algebroid is equal to $N^{\mathfrak{A}G}_{\mathfrak{A}H}.$ Furthermore, $N^G_H$ is a $\mathcal{VB}$-groupoid over $H$.
\item  the manifold $\dnc(G,H)\rightrightarrows\dnc(G^0,H^0)$ is a Lie groupoid whose structure maps are $\dnc(s)$, $\dnc(r)$ and Lie algebroid is equal to $\dnc(\mathfrak{A}G,\mathfrak{A}H)$.
\item if $K\subseteq H$ is a Lie subgroupoid, then the restriction of the normal bundle $N^G_H\big|_{K}\rightrightarrows N^{G^0}_{H^0}\big|_{K^0}$ is a Lie subgroupoid of $N^G_H\rightrightarrows N^{G^0}_{H^0}$ whose Lie algebroid is $N^{\mathfrak{A}G}_{\mathfrak{A}H}\big|_{\mathfrak{A}K}.$ Furthermore $N^G_{H}\big|_{K}$ is a $\mathcal{VB}$-groupoid over $K$.
\end{enumerate}
\end{theorem}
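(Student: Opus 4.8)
The plan is to treat all three statements uniformly, by applying the two functors $N^{(-)}_{(-)}$ and $\dnc(-,-)$ to the structure maps of the groupoid and transporting the groupoid axioms functorially; the underlying smooth manifolds $N^G_H$ and $\dnc(G,H)$ are already provided by the construction of \Cref{dnc section}, so only the groupoid, algebroid and $\mathcal{VB}$ structures remain to be checked. Recall that a Lie groupoid $G\rightrightarrows G^0$ is the data of the source and range submersions $s,r\colon G\to G^0$, the unit embedding $u\colon G^0\to G$, the inversion $i\colon G\to G$ and the multiplication $m\colon G\times_{G^0}G\to G$, subject to axioms that are all commutative diagrams built from these maps and the fibred products $G\times_{G^0}G$, $G\times_{G^0}G\times_{G^0}G$, etc. Saying that $H$ is a Lie subgroupoid means exactly that each structure map carries the $H$-data into the $H$-data, i.e.\ $s(H),r(H)\subseteq H^0$, $u(H^0)\subseteq H$, $i(H)\subseteq H$, $m(H\times_{H^0}H)\subseteq H$, and that the intersection $H\cap G^0=H^0$ is clean.

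First I would produce the structure maps of $\dnc(G,H)$ and of $N^G_H$. By \Cref{Functoriality of DNC} each inclusion of pairs above yields smooth maps $\dnc(s),\dnc(r)\colon\dnc(G,H)\to\dnc(G^0,H^0)$, $\dnc(u)$, $\dnc(i)$, and likewise $Ns,Nr,Nu,Ni$. The only point requiring an argument is the multiplication: applying \Cref{DNC fibered product} with $f_1=s$, $f_2=r$ (transverse since $s$ is a submersion, and whose restrictions $s|_H,r|_H$ are the submersions of $H$, hence also transverse) gives the canonical identifications $\dnc(G\times_{G^0}G,\,H\times_{H^0}H)\cong\dnc(G,H)\times_{\dnc(G^0,H^0)}\dnc(G,H)$ and its $N$-analogue, so that $\dnc(m)$ and $Nm$ become genuine multiplications. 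Because both $\dnc$ and $N$ preserve identities, composition, and --- by iterating \Cref{DNC fibered product} --- the higher fibred products $G\times_{G^0}\cdots\times_{G^0}G$, every defining diagram of $G$ is sent to the corresponding diagram for $\dnc(G,H)$ and $N^G_H$, which gives all the groupoid axioms at once. That $\dnc(G,H)$ is a \emph{Lie} groupoid then follows from the submersion criterion of \Cref{Functoriality of DNC}: $\dnc(s)$ is a submersion precisely because $s$ and $s|_H$ are, and $\dnc(u)$ is automatically an embedding, being a section of $\dnc(s)$ since $\dnc(s)\circ\dnc(u)=\dnc(s\circ u)=\mathrm{id}$. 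The same reasoning applies verbatim to $N^G_H$.

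For the $\mathcal{VB}$-groupoid assertions I would note that $N^G_H\to H$ and $N^{G^0}_{H^0}\to H^0$ are vector bundles and that each of $Ns,Nr,Nu,Ni,Nm$ is, fibrewise, the linear map on normal spaces induced by a differential; hence they are vector-bundle morphisms covering the corresponding structure maps of $H$, which is exactly the statement that $N^G_H$ is a $\mathcal{VB}$-groupoid over $H$. Statement $3$ then follows by restriction: for $K\subseteq H$ a Lie subgroupoid, $N^G_H|_K$ is the restriction of the vector bundle $N^G_H\to H$ to the submanifold $K$, and the $\mathcal{VB}$-groupoid structure maps restrict to it because $s,r,u,i,m$ restrict from $H$ to $K$; the subgroupoid, algebroid and $\mathcal{VB}$-groupoid claims over $K$ are immediate consequences of those over $H$.

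The main obstacle is the identification of the Lie algebroids. Writing the algebroid of any Lie groupoid $\Gamma\rightrightarrows\Gamma^0$ as the normal bundle of its units, $\mathfrak A\Gamma=N^{\Gamma}_{\Gamma^0}$ with anchor $dr$, the claim of statement $1$ becomes
\[
\mathfrak A(N^G_H)=N^{N^G_H}_{N^{G^0}_{H^0}}\;\cong\;N^{N^G_{G^0}}_{N^H_{H^0}}=N^{\mathfrak A G}_{\mathfrak A H},
\]
that is, the commutation of the two normal-bundle constructions attached to the square of submanifolds $H^0=H\cap G^0\subseteq H,\,G^0\subseteq G$, i.e.\ the canonical flip of the double vector bundle they define. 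I would verify it by choosing, at each $x\in H^0$, a splitting $T_xG=A\oplus B\oplus C\oplus D$ with $A=T_xH^0$, $A\oplus B=T_xH$ and $A\oplus C=T_xG^0$; here the clean-intersection hypothesis $T_xH\cap T_xG^0=T_xH^0$ is exactly what makes $T_xH+T_xG^0=A\oplus B\oplus C$ direct and the splitting well defined. A direct computation then shows that both total spaces have tangent space $A\oplus B\oplus C\oplus D$ at $x$, with the two vector-bundle structures interchanged by the flip (the base direction $B$ of one becoming a fibre direction of the other, and conversely for $C$), which establishes the isomorphism. Finally, the algebroid of $\dnc(G,H)$ is obtained by gluing: over $t\neq0$ it is $\mathfrak A G\times\R^*$ and over $t=0$ it is $\mathfrak A(N^G_H)=N^{\mathfrak A G}_{\mathfrak A H}$, and these assemble into $\dnc(\mathfrak A G,\mathfrak A H)$ compatibly with the smooth structure of \Cref{smooth structure dnc construction}.
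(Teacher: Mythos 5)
Your proposal is correct and takes essentially the same route as the paper: the paper's proof of statements 1 and 2 is precisely the observation that the structure maps, their submersivity, and the groupoid axioms are transported by \Cref{smooth structure dnc construction}, \Cref{Functoriality of DNC} and \Cref{DNC fibered product} (the fibered-product proposition handling the multiplication, as you do), and its proof of statement 3 --- the inverse image of $K$ under the submersive groupoid morphism $N^G_H\to H$ --- is the same argument as your restriction of the vector bundle $N^G_H\to H$ to $K$. Your explicit identification of the Lie algebroids, via $\mathfrak{A}\Gamma=N^{\Gamma}_{\Gamma^0}$ and the flip of the iterated normal bundles of the clean square $(G;H,G^0;H^0)$, together with the gluing argument for $\dnc(\mathfrak{A}G,\mathfrak{A}H)$, fills in detail that the paper leaves implicit under ``direct consequences'', but it is in the same spirit rather than a different method.
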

\begin{proof}
The fact that $N^G_H\rightrightarrows N^{G^0}_{H^0}$ and $\dnc(G,H)$ are Lie groupoids is a direct consequence of propositions \ref{Functoriality of DNC} and  \ref{DNC fibered product}. For example the product map of $\mathcal{N}^G_H\rightrightarrows N^{G^0}_{H^0}$ is defined using proposition \ref{DNC fibered product}. If $M:G^{(2)}\to G$ denotes the product map then $$NM:N^{G^{(2)}}_{H^{(2)}}\to N^G_H$$ is well defined. Now using proposition \ref{DNC fibered product}, it follows that $$N^{G^{(2)}}_{H^{(2)}}=N^G_H\times _{ds,dr}{N^G_H}.$$ Hence $NM$ can be identified with a map $$N^G_H\times _{ds,dr}{N^G_H}\to N^G_H$$. This is by definition the product map of $\mathcal{N}^G_H$. Axioms like associativity and the identity  all follow from the fact that the corresponding axioms hold for $G$ and the functoriality of the constructions $N$ and $\dnc$.

 For the Lie algebroid computations, since $\mathcal{N}^G_H$ is a Lie groupoid, its Lie algebroid can be identified with kernel of the source map. The source map is $Ns:N^G_H\to N^{G^0}_{H^0}$. Hence using proposition \ref{last min prop}, $dNs$ can be identified with $Nds:N^{TG}_{TH}\to N^{TG^0}_{TH^0}$. The kernel under this identification becomes $N^{\mathfrak{A}G}_{\mathfrak{A}H}$. Here we similarly identified $\mathfrak{A}G,\mathfrak{A}H$ with the kernel of the source map.
 
For the Lie algebroid of the deformation to the normal cone, one proceeds similarly. The Lie algebraoid of $\dnc(G,H)$ is the kernel of $d\dnc(s):T\dnc(G,H)\to T\dnc(G^0,H^0)$. The kernel of such map has to lie in $\ker(d\pi_\R).$ Hence one can instead only consider $$d\dnc(s):\ker(d\pi_R)\subseteq T\dnc(G,H)\to \ker(d\pi_\R)\subseteq T\dnc(G^0,H^0).$$ One then procceds exactly the same as for $\mathcal{N}^G_H$. 

 The third statement follows from the first and because the projection map onto the base
$$ \begin{tikzcd}N^G_H\arrow[d,shift right=0.85]\arrow[d, shift left=0.85]\arrow[r]& H\arrow[d,shift right=0.85]\arrow[d, shift left=0.85]\\ N^{G^0}_{H^0}\arrow[r]&H^0
\end{tikzcd}$$ is a submersive morphism of groupoids, hence the inverse image of the Lie subgroupoid $K$ is a Lie groupoid.
\end{proof}
From now on, for a Lie groupoid $G$ and a Lie subgroupoid $H$, we will use $\mathcal{N}^G_H$ to denote the space $N^G_H$ equipped with the structure of a Lie groupoid given by \Cref{Construction of dnc grouopids}.
\begin{rems}\label{sections of dnc algebr}\label{trivializing dnc Rn}\label{metric dnc E}\begin{enumerate}
 \item Let $E\to M$ be a vector bundle, $V\subseteq M$ a submanifold, $F\to V$ a subbundle of the restriction of $E$ to $V$. By \Cref{Construction of dnc grouopids}, the space $\dnc(E,F)$ is a vector bundle over $\dnc(M,V).$ Since a section of $\dnc(E,F)$ is determined by its values on the dense set $M\times \R^*$. It follows that $$\Gamma(\dnc(E,F))=\{X\in \Gamma(E\times \R):X_{|V\times\{0\}}\in \Gamma(F)\},$$ where $\Gamma$ denotes the set of global sections (continuous or smooth).
 \item Let $V=V_0+a\subseteq \R^n$ be an affine subspace where $V_0$ is the underlying vector space, $a\in \R^n$. Let $L$ be the orthogonal of $V_0$, $\pi_{V_0},\pi_L$ the orthogonal projections. The space $\dnc(\R^n,V)$ will be identified with $\R^{n+1}$ by the following map \begin{align*}
&\dnc(\R^n,V)\to \R^{n+1}\\
&(x,t)\to (a+\pi_{V_0}(x-a)+\frac{\pi_L(x-a)}{t},t),\quad t\neq 0\\
&(x,X,0)\to (x+X,0),
\end{align*}
where in the last identity we identified $N^{\R^n}_V$ with $L$.
\end{enumerate}
\end{rems}
\begin{examps}\label{examps dnc}\begin{enumerate}
\item If $M$ is a smooth manifold, then $$\dnc(M\times M,M)=M\times M\times \R^*\sqcup TM\times \{0\}\rightrightarrows M\times \R$$ is the tangent groupoid of Connes. He used it to give a short elegant proof of Atiyah Singer index theorem \cite{MR1303779}. The product law is given by $$(x,y,t)\cdot (y,z,t)=(x,z,t),\; (x,X,0)\cdot (x,Y,0)=(x,X+Y,0).$$
\item Let $L\subseteq G^0$ be a submanifold. Here we will calculate $\mathcal{N}^G_L$. Notice that $N^G_{L}$ is equal to $ \ker(ds)_{|L}\oplus N^{G^0}_L$. If $(Y,Z)\in  \ker(ds)_{|L}\oplus N^{G^0}_L$, then $s_{\mathcal{N}^G_L}(Y,Z)=ds(Y)+ds(Z)=Z$ by assumption on $Y$. Also $r_{\mathcal{N}^G_L}(Y,Z)=dr(Y)+dr(Z)=\natural(Y)+Z.$ Here we used the definition of the anchor map $\natural:=dr-ds$. It follows that the groupoid $\mathcal{N}^G_{L}\rightrightarrows N^{G^0}_L$ is equal to $$\{(X,Y,Z):l\in L,X,Z\in N^{G^0}_{L,l},Y\in \mathfrak{A}_lG, X=Y+\natural(Z)\},$$ with the structural maps $$r(X,Y,Z)=X,s(X,Y,Z)=Z.$$ Finally the product is given by $$(A,B,C)(C,D,E)=(A,B+D,E).$$ To see this notice that one has a natural map $\mathcal{N}^G_L\to \mathcal{N}^G_{G^0}.$ This is simply the quotient map. In the above identification this map sends $(X,Y,Z)\to Y$. By functoriality of the map, it has to be a morphism of groupoids, hence the product has to agree with the product of the adiabatic groupoid $N^G_{G^0}.$
\end{enumerate}
\end{examps}

\subsection{DNC iterated}\label{Dnc iterated sect}
Let $M$ be a smooth manifold, $V_0\subseteq M$ a submanifold, $V_{1}\subseteq \dnc(M,V_0)$ a submanifold. One defines $$\dnc^2(M,V_0,V_{1}):=\dnc(\dnc(M,V_0),V_{1}).$$ 
This space being a deformation space admits an $\R^*$-action that will be denoted by $\lambda^{(1)},$ and a projection map $\pi_\R^{(1)}:\dnc^2(M,V_{0},V_{1})\to \R$.

\bigskip

If $V_{1}$ is $\R^*$-invariant, then by \Cref{action dnc}, the group $\R^*$ acts on $\dnc^2(M,V_{0},V_{1})$. This action will be denoted by $\lambda^{(0)}$, furthermore the group $\left(\R^*\right)^2$ acts on $\dnc^2(M,V_0,V_1)$ by $\lambda^{(0)}\times \lambda^{(1)}.$

\bigskip

Let $\pi_\R:\dnc(M,V)\to \R$ be the projection constructed in \Cref{dnc section}. If $\pi_\R(V_1)$ is a point of $\R$, then the map $$\pi_\R^{(0,1)}:=\dnc(\pi_\R):\dnc^2(M,V_{0},V_{1})\to \dnc(\R,\pi_\R(V_{1}))=\R^2$$ is a smooth submersion, where  we identified $\dnc(\R,\pi_\R(V_1))$ with $\R^2$ using \cref{trivializing dnc Rn}.

If $V_{1}$ is furthermore $\R^*$-invariant (hence $\pi_\R(V_1)=\{0\}$), then one has for all $u,t\in\R^*$ 
\begin{equation}\label{relation projection and action}
\pi^{(0,1)}_\R\lambda^{(1)}_u=(\frac{\pi^{(0)}_\R}{u},u\pi^{(1)}_\R),\; \pi_\R^{(0,1)}\lambda^{(0)}_u=(u\pi^{(0)}_\R,\pi_\R^{(1)}),
\end{equation} where $\pi^{(0,1)}_\R=(\pi^{(0)}_{\R},\pi^{(1)}_\R)$.

\bigskip

By induction, given a sequence of submanifolds $$V_0\subseteq M,\;V_{1}\subseteq \dnc(M,V_0),\;V_{2}\subseteq \dnc^2(M,V_0,V_{1}),\cdots ,\;V_{k}\subseteq \dnc^{k}(M,V_0,\dots ,V_{k-1}).$$ We define the space $$\dnc^{k+1}(M,V_0,\cdots,V_k):=\dnc(\dnc^{k}(M,V_0,\cdots,V_{k-1}),V_{k}).$$

If for each $1\leq i\leq k$, $\pi^{(0,\dots,i-1)}_\R(V_i)$ is an affine subspace of $\R^i$ and $\pi^{(0,\dots,i-1)}_\R:V_i\to \pi^{(0,\dots,i-1)}_\R(V_i)$ is a submersion, then by \Cref{Functoriality of DNC}, the map $$\pi_\R^{(0,\cdots,k)}:=\dnc(\pi_\R^{(0,\cdots,k-1)}):\dnc^{k+1}(M,V_0,\cdots,V_k)\to \dnc(\R^k,\pi^{(0,\dots,k-1)}_\R(V_k))=\R^{k+1}$$ is a smooth submersion, where we identified $\dnc(\R^k,\pi_\R^{(0,\dots,k-1)}(V_k))$ with $\R^{k+1}$ using \cref{trivializing dnc Rn}.

If each $V_i$ is $(\R^*)^i$ invariant, then the space $\dnc^{k+1}(M,V_0,\cdots,V_k)$ admits $k+1$ pairwise commuting actions of $\R^*$-denoted $\lambda^{(k)},\dots,\lambda^{(0)}$.

\bigskip

Propositions \ref{Functoriality of DNC},\;\ref{DNC fibered product} and \Cref{Construction of dnc grouopids} have obvious extensions to $\dnc^k$. \begin{cor} If $G\rightrightarrows G^0$ is a Lie groupoid, $H_0\subseteq G$, $H_{1}\subseteq \dnc(G,H_0)$, $\dots ,\;H_{k}\subseteq \dnc^{k}(G,H_0,\dots ,H_{k-1}).$ are Lie subgroupoids, then $$\dnc^{k+1}(G,H_0,H_{1},\cdots,H_k)\rightrightarrows \dnc^{k+1}(G^0,H_0^0,\cdots,H_k^0)$$ is a Lie groupoid.\end{cor}
\subsection{Description of the symbol part}\label{prem sect prop}
The Carnot groupoid is a groupoid of the form $$M\times M\times \R^*\sqcup Q\times \{0\}\rightrightarrows M\times \R.$$ In order to describe the "symbol part" $Q\times \{0\}$ of this groupoid, we prove the following proposition in a slightly more general setting. 
\begin{prop}\label{prop exact seq}Let $G\rightrightarrows G^0$ be a Lie groupoid, $H\subseteq G$ a Lie subgroupoid which is a bundle of connected Lie groups such that $$(dr-ds)(T_hG)\subseteq T_{s(h)}H^0,\quad \forall h\in H.$$ Then \begin{enumerate}
\item the Lie groupoid $\mathcal{N}^G_H\rightrightarrows N^{G^0}_{H^0}$ is a bundle of Lie groups.
\item the Lie groupoid $\mathcal{N}^G_H\big|_{H^0}\rightrightarrows N^{G^0}_{H^0}$ is a bundle of abelian Lie groups which is isomorphic (as a bundle of Lie groups over $N^{G^0}_{H^0}$) to $\mathfrak{A}G/\mathfrak{A}H\times_{H^0}N^{G^0}_{H^0}.$
\item the Lie groupoid $\mathcal{N}^G_H\rightrightarrows N^{G^0}_{H^0}$ sits in an exact sequence of bundles of Lie groups over $N^{G^0}_{H^0}$ whose fiber at $(x_0,X_0)\in N^{G^0}_{H^0}$ is $$1\to\mathfrak{A}G_{x_0}/\mathfrak{A}H_{x_0}\to\left(\mathcal{N}^G_H\right)_{(x_0,X_0)}\to  H_{x_0}\to 1.$$ 
\end{enumerate}
Furthermore the action associated to this exact sequence of the Lie algebra $\mathfrak{A}H_{x_0}$ on the abelian group $\mathfrak{A}G_{x_0}/\mathfrak{A}H_{x_0}$ is as follows; if $X,Y\in \Gamma^\infty(\mathfrak{A}G)$ such that $X_{|H^0}\in \Gamma^\infty(\mathfrak{A}H)$, then by our assumption, $$[X,Y](x_0)\mod \mathfrak{A}H_{x_0}$$ only depends on $X(x_0)\in \mathfrak{A}H_{x_0}$ and $Y(x_0)\mod \mathfrak{A}H_{x_0}\in \mathfrak{A}G_{x_0}/\mathfrak{A}H_{x_0}$. In particular the above exact sequence is central if and only if this action is trivial.
\end{prop}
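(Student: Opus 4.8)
The plan is to derive statements (1)--(3) entirely from the single identity $Ns=Nr$ on $\mathcal{N}^G_H$, and then to reduce the ``furthermore'' clause to a computation of the bracket of the bundle of Lie algebras $N^{\mathfrak{A}G}_{\mathfrak{A}H}$. First I would prove (1). Since $H$ is a bundle of groups, $s(h)=r(h)=:x$ for every $h\in H$, so $Ns(h,\xi)$ and $Nr(h,\xi)$ have the same base component $x$; for the fibre component their difference is $\overline{(ds_h-dr_h)(\xi)}\in T_xG^0/T_xH^0$, which vanishes exactly by the hypothesis $(dr-ds)(T_hG)\subseteq T_xH^0$. Hence $Ns=Nr$, and $\mathcal{N}^G_H$ is a bundle of Lie groups over $N^{G^0}_{H^0}$.

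Next, parts (2) and (3). By \Cref{Construction of dnc grouopids}, $N^G_H$ is a $\mathcal{VB}$-groupoid over $H$, so the bundle projection $N^G_H\to H$ is a morphism of groupoids; restricted to the fibre over $(x_0,X_0)$ it gives a homomorphism $(\mathcal{N}^G_H)_{(x_0,X_0)}\to H_{x_0}$, $(h,\xi)\mapsto h$. This is surjective because $s$ is a submersion sending $H$ to $H^0$, so the induced map $T_hG/T_hH\to T_{x_0}G^0/T_{x_0}H^0$ is onto; its kernel consists of the $(e_{x_0},\xi)$ with $\overline{ds(\xi)}=X_0$, which the canonical splitting $T_{e_{x_0}}G=\mathfrak{A}G_{x_0}\oplus T_{x_0}G^0$ identifies with $\mathfrak{A}G_{x_0}/\mathfrak{A}H_{x_0}$. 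To get the kernel's group law I would evaluate the multiplication $Nm$ (well defined by \Cref{DNC fibered product}) at a pair of units, using $dm_{(e,e)}(\xi_1,\xi_2)=\xi_1+\xi_2-\mathrm{unit}_*(ds(\xi_1))$; on the kernel this collapses to addition in $\mathfrak{A}G_{x_0}/\mathfrak{A}H_{x_0}$, proving the kernel is an abelian vector group and globally that $\mathcal{N}^G_H\big|_{H^0}\cong \mathfrak{A}G/\mathfrak{A}H\times_{H^0}N^{G^0}_{H^0}$. This yields the exact sequence of (3).

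Finally the action. Because $\mathcal{N}^G_H$ is a bundle of Lie groups, its algebroid $N^{\mathfrak{A}G}_{\mathfrak{A}H}$ is a bundle of Lie algebras whose fibre $\mathfrak{e}$ over $(x_0,X_0)$ is canonically $\mathfrak{A}H_{x_0}\oplus \mathfrak{A}G_{x_0}/\mathfrak{A}H_{x_0}$, with abelian ideal $\mathfrak{a}=\mathfrak{A}G_{x_0}/\mathfrak{A}H_{x_0}$ and $\mathfrak{e}/\mathfrak{a}=\mathfrak{A}H_{x_0}$. The conjugation action of $H_{x_0}$ on the abelian kernel, once differentiated, is precisely the restriction of $[\,\cdot\,,\cdot\,]_{\mathfrak{e}}$ to $\mathfrak{A}H_{x_0}\times\mathfrak{a}\to\mathfrak{a}$, so it suffices to identify this bracket. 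I would read it off from the description of $N^{\mathfrak{A}G}_{\mathfrak{A}H}$ as the $t=0$ part of $\dnc(\mathfrak{A}G,\mathfrak{A}H)=\mathfrak{A}(\dnc(G,H))$, whose sections (\Cref{sections of dnc algebr}) are families tangent to $\mathfrak{A}H$ at $t=0$; the induced bracket at $t=0$ is the $\mathfrak{A}G$-bracket taken modulo $\mathfrak{A}H_{x_0}$, i.e. $([X],[Y])\mapsto \overline{[X,Y](x_0)}$ for extensions with $X|_{H^0}\in\Gamma^\infty(\mathfrak{A}H)$.

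The crux is the well-definedness of $\overline{[X,Y](x_0)}$, and this is where the hypothesis enters. Writing $X=\sum_a\phi_ae_a$ and $Y=\sum_a\psi_ae_a$ in a frame $\{e_a\}$ adapted to $\mathfrak{A}H\subseteq\mathfrak{A}G|_{H^0}$ and expanding by the Leibniz rule, the components transverse to $\mathfrak{A}H_{x_0}$ acquire factors $(\rho(Y)\phi_a)(x_0)$ when $X(x_0)=0$, or $(\rho(X)\psi_a)(x_0)$ when $Y(x_0)\in\mathfrak{A}H_{x_0}$, where $\rho=dr|_{\mathfrak{A}G}$ is the anchor. These vanish because (i) the hypothesis at units gives $\rho(\mathfrak{A}G_{x_0})\subseteq T_{x_0}H^0$, so $d\phi_a(\rho(Y)(x_0))=0$ for the transverse $\phi_a$ (which vanish on $H^0$), and (ii) $\rho(\mathfrak{A}H)=0$ since $H$ is a bundle of groups, so $\rho(X)(x_0)=\rho(W)=0$; the surviving terms lie in $\mathfrak{A}H_{x_0}$ by the Lie-subalgebroid property of $\mathfrak{A}H$. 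Isolating exactly these surviving terms and checking the hypothesis kills the rest is the main obstacle. The closing ``central iff trivial'' clause is then immediate: as $H_{x_0}$ is connected, its conjugation action on $\mathfrak{a}$ is trivial if and only if its infinitesimal version, the bracket action just computed, is.
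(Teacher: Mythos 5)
Your proposal is correct, and its load-bearing parts coincide with the paper's proof: part (1) is the same observation that the hypothesis is exactly $Ns=Nr$; exactness in (3) is handled the same way; and the crux --- that $[X,Y](x_0)\bmod \mathfrak{A}H_{x_0}$ depends only on $X(x_0)$ and $Y(x_0)\bmod \mathfrak{A}H_{x_0}$ --- rests on the same ingredients in both proofs (the Leibniz rule, vanishing of the anchor on $\mathfrak{A}H$ because $H$ is a bundle of groups, the hypothesis at units giving $\natural(\mathfrak{A}G_{x_0})\subseteq T_{x_0}H^0$, and closure of brackets of sections restricting into $\mathfrak{A}H$); your two adapted-frame cases ($X(x_0)=0$, respectively $Y(x_0)\in\mathfrak{A}H_{x_0}$) merge the paper's three bullet cases ($Y(x_0)=0$; $Y_{|H^0}\in\Gamma^\infty(\mathfrak{A}H)$; $X(x_0)=0$) and do suffice, by bilinearity of $(X,Y)\mapsto \overline{[X,Y](x_0)}$. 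Where you genuinely diverge is in the plumbing. For (2), the paper does not differentiate the multiplication map: it uses the identity $N^Z_Y\big|_X=N^Z_X/N^Y_X$ to exhibit $\mathcal{N}^G_H\big|_{H^0}$ as a quotient of $\mathcal{N}^G_{H^0}$, whose structure $\{(X,Y,Z):Z=X+\natural(Y)\}$ was already recorded in \Cref{examps dnc}, so abelianness and the product decomposition drop out at once from the vanishing of the induced anchor on $\mathfrak{A}G/\mathfrak{A}H$; your computation via $dm_{(e,e)}(\xi_1,\xi_2)=\xi_1+\xi_2-du(ds(\xi_1))$ with strictly composable representatives is more self-contained but re-proves what the quotient trick gives for free. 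For the ``furthermore'' clause, the paper proves the bracket formula and then simply asserts that it is the action attached to the extension; you instead identify that action with the fiberwise bracket of the Lie algebra bundle $N^{\mathfrak{A}G}_{\mathfrak{A}H}$, read off from the section description of \Cref{sections of dnc algebr} --- this fills in exactly the step the paper declares clear, at the cost of two compressed claims you should make explicit in a write-up: that elements of the abelian kernel are represented by the $t=0$ classes of sections of the form $tY$, and that $[X,tY]=t[X,Y]$, both of which hold because the anchor of $\dnc(\mathfrak{A}G,\mathfrak{A}H)$ has no $\partial_t$-component. Finally, note that connectedness of the fibres $H_{x_0}$ is used exactly once, in your closing ``central iff trivial'' equivalence, which is also where the paper needs it.
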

\begin{proof}
\begin{enumerate}
\item The condition $(dr-ds)(T_hG)\subseteq T_{s(h)}H^0$ can be restated as the equality of the maps $Ns,Nr:T_hG/T_hH\to T_{s(h)}G^0/T_{s(h)}H^0$. Those two maps are the source and the target maps of  the Lie groupoid  $\mathcal{N}^G_H=\sqcup_{h\in H}T_hG/T_hH\rightrightarrows N^{G^0}_{H^0}$. By assumption, they coincide which means that $\mathcal{N}^G_H\rightrightarrows N^{G^0}_{H^0}$ is a bundle of Lie groups.

\item If $X\subseteq Y\subseteq Z$ are manifolds, then $N^Z_Y\big|_X=N^Z_X/N^Y_X$. It follows that $\mathcal{N}^G_{H}\big|_{H^0}$ is the surjective image by a groupoid morphism of the Lie groupoid $\mathcal{N}^G_{H^0}$ with kernel $N^{H}_{H^0}.$ One has $$\mathcal{N}^G_{H^0}=\{(X,Y,Z):X,Z\in N^{G^0}_{H^0},Y\in \mathfrak{A}G/\mathfrak{A}H, Z=X+\natural(Y)\}\rightrightarrows N^{G^0}_{H^0}.$$ By assumption, the map $\natural:\mathfrak{A}G/\mathfrak{A}H\to \mathcal{N}^{G^0}_{H^0}$ is the zero map. Hence $\mathcal{N}^G_{H^0}\rightrightarrows N^{G^0}_{H^0}$ is a bundle of abelian Lie groups, hence $\mathcal{N}^G_{H}\big|_{H^0}\rightrightarrows N^{G^0}_{H^0}$ as well.
\item the exact sequence is the natural sequence $$ \mathcal{N}^G_H|_{H^0}\to \mathcal{N}^G_H\to H.$$ \begin{enumerate}
\item exactness at $\mathfrak{A}G_{x_0}/\mathfrak{A}H_{x_0}$
 is clear, because $\mathcal{N}^G_{H}\big|_{H^0}$ is a subgroupoid of $\mathcal{N}^G_H$
 \item  exactness at $\left(\mathcal{N}^G_H\right)_{(x_0,X_0)}$ follows directly from the definitions.
 \item the map $s:G\to G^0$ is a submersion, hence exactness at $H_{x_0}$.
 \end{enumerate}
 Let us prove that $[X,Y]$ only depends on $X(x_0)$ and $Y(x_0)$, where $X,Y\in \Gamma^\infty(\mathfrak{A}G)$  such that $X_{|H^0}\in \Gamma^\infty(\mathfrak{A}H).$\begin{itemize}
 \item If $Y$ vanishes at $x_0$, then locally it can be written as the sum of sections of the form $fZ$, where $f:M\to \R$ vanishes at $x_0$ and $Z\in \Gamma^\infty(\mathfrak{A}G)$. One has $$[X,fZ]=f(x_0)[X,Z](x_0)+df_{x_0}(\natural(X(x_0)))Z(x_0)=0,$$ because $X(x_0)\in \mathfrak{A}H_{x_0}$ and $H$ is a bundle of Lie groups, hence $\natural (X(x_0))=0$.
 \item If $Y_{|H^0}\in \Gamma^\infty(\mathfrak{A}H)$, then $[X,Y](x_0)\in \mathfrak{A}H_{x_0}$ because the Lie bracket computation could be carried out inside $\mathfrak{A}H$.
 \item If $X$ vanishes at $x_0$, then $dX_{x_0}:T_{x_0}G^0\to \mathfrak{A}_{x_0}G$ is well defined. It is well known that $[X,Y](x_0)=-dX_{x_0}(\natural(Y(x_0))).$ This formula can be proved locally by writing $X$ as sum of $fZ$. The condition $X_{|H^0}\in \Gamma^\infty(\mathfrak{A}H)$ implies that $dX_{x_0}(T_{x_0}H^0)\subseteq \mathfrak{A}H^0$. The assumption on $dr-ds$ implies that $\natural (Y(x_0))\in T_{x_0}H^0$, hence $[X,Y](x_0)\in \mathfrak{A}H_{x_0}.$
 \end{itemize}
\end{enumerate}
That this is the action associated to the abelian extension of $\left(\mathcal{N}^{G}_{H}\right)_{(x_0,X_0)}$ is then clear.
\end{proof}

\section{The case of a single subbundle}\label{sect case 1}
\subsection{The product structure of the symbol part}\label{M single H examp}
Let $M$ be a smooth manifold, $H\subseteq \mathcal{N}^{M\times M}_M=TM$ a subbundle. In this section we prove \Cref{main thm H single}, which proves the claim made in the introduction (at least on the algebraic level) that the fiber of the groupoid $\dnc^2(M\times M,M,H\times \{0\})\rightrightarrows \dnc^2(M,M,M\times\{0\})=M\times \R^2$ over $M\times \{1\}\times \R$ is equal to the groupoid constructed in \cite{Choi:2015aa,Choi:2017aa,Ponge:2017aa,Erp:2016aa}. In \Cref{sect H bundle 1 case}, we will write local charts which will prove that in fact the fiber is equal as a smooth manifold to the one constructed in  \cite{Choi:2015aa,Choi:2017aa,Ponge:2017aa,Erp:2016aa}. 

Before stating the theorem, let us recall the constuction of the Levi form $\mathcal{L}$ : the map \begin{align}\label{dfn : levi}
\Gamma^\infty(H)\times \Gamma^\infty(H)\to \Gamma^\infty(TM/H),\quad (X,Y)\to [X,Y]\mod H
\end{align}
is $C^\infty(M)$-linear because \begin{align*}
[fX,Y]=f[X,Y]-Xf Y=f[X,Y]\mod H.
\end{align*} Hence it comes from an anti symmetric bilinear bundle map $\mathcal{L}:H\times H\to TM/H$.
\begin{theorem}\label{main thm H single}
The groupoid $\mathcal{N}^{\dnc(M\times M,M)}_{H\times\{0\}}\rightrightarrows N^{M\times \R}_{M\times\{0\}}=M\times \R$ is a bundle of Lie groups. It is isomorphic to the bundle of Lie groups $H\oplus TM/H\times\R\rightrightarrows M\times \R$ equipped with the group law $$(h,n,t)\cdot (h',n',t)=\left(h+h',n+n'+\frac{t}{2}\mathcal{L}(h,h'),t\right).$$
\end{theorem}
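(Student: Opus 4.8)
The plan is to apply \Cref{prop exact seq} to $G=\dnc(M\times M,M)\rightrightarrows M\times\R$ and its subgroupoid $\mathcal{H}:=H\times\{0\}$, to identify the resulting bundle of Lie groups as a central extension, and to read off the group law from the induced Lie algebroid bracket via Baker--Campbell--Hausdorff.

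First I would verify the hypotheses of \Cref{prop exact seq}. Since $\mathcal{H}=H\times\{0\}$ is a subbundle of $TM\times\{0\}$, it is a bundle of connected abelian Lie groups with unit space $\mathcal{H}^0=M\times\{0\}$. At $t=0$ the groupoid $\dnc(M\times M,M)$ restricts to $TM\rightrightarrows M$, whose source and target are both the bundle projection; concretely, using $T_{(x,X,0)}\dnc(M\times M,M)=\R\oplus T_xM\oplus T_xM$ and the formula for $d\dnc(s),d\dnc(r)$ from the proof of \Cref{Functoriality of DNC}, one finds $ds=dr$ at every point of $TM\times\{0\}$. Hence $(dr-ds)(T_hG)\subseteq T_{s(h)}\mathcal{H}^0$ for $h\in\mathcal{H}$, and \Cref{prop exact seq} shows that $\mathcal{N}^{\dnc(M\times M,M)}_{H\times\{0\}}\rightrightarrows N^{M\times\R}_{M\times\{0\}}=M\times\R$ is a bundle of Lie groups.

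Next I would identify the extension. By \Cref{Construction of dnc grouopids}, $\mathfrak{A}G=\dnc(\mathfrak{A}(M\times M),\mathfrak{A}M)=\dnc(TM,0_M)$, the vector bundle $TM\times\R\to M\times\R$ with fibre $T_xM$, while $\mathfrak{A}\mathcal{H}=H$ over $M\times\{0\}$. Thus over $(x_0,X_0)\in M\times\R$ the proposition yields
\[
1\to (TM/H)_{x_0}\to \left(\mathcal{N}^{\dnc(M\times M,M)}_{H\times\{0\}}\right)_{(x_0,X_0)}\to H_{x_0}\to 1,
\]
so the underlying bundle is $H\oplus TM/H\times\R$ over $M\times\R$. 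The associated action of $\mathfrak{A}\mathcal{H}_{x_0}=H_{x_0}$ on the kernel is $[X,Y](x_0)\bmod\mathfrak{A}\mathcal{H}_{x_0}$, which vanishes at $t=0$; hence the extension is central, with $TM/H$ the centre, matching the abelian fibre at $t=0$. To obtain the cocycle I compute the bracket of two elements of $H$. On constant lifts $\sigma_X,\sigma_Y$ of vector fields $X,Y$, the algebroid bracket of $\dnc(TM,0_M)$ is the usual tangent-groupoid rescaling $[\sigma_X,\sigma_Y](x,t)=t[X,Y](x)$; for $X,Y\in\Gamma^\infty(H)$ this is $t\,\mathcal{L}(X,Y)$ modulo $H$. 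This expression vanishes along $\mathfrak{A}\mathcal{H}$, and passing to the normal algebroid $N^{\mathfrak{A}G}_{\mathfrak{A}H}$ replaces $t$ by the normal coordinate $X_0$, so the fibre Lie algebra at $(x_0,X_0)$ is $H_{x_0}\oplus(TM/H)_{x_0}$ with central bracket $[h,h']=X_0\,\mathcal{L}(h,h')$. Being an extension of vector groups, each fibre is the simply connected $2$-step nilpotent Lie group with this Lie algebra; in exponential coordinates Baker--Campbell--Hausdorff (one surviving bracket term) gives $(h,n)\cdot(h',n')=(h+h',n+n'+\tfrac12 X_0\,\mathcal{L}(h,h'))$, which is the stated law once the normal coordinate $X_0$ is written as $t$, and the isomorphism is the identity on $M\times\R$.

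I expect the main difficulty to be the passage to the normal algebroid: rigorously showing that the bracket of $\mathcal{N}^{\mathfrak{A}G}_{\mathfrak{A}H}$ on $H$ is obtained from the $t$-dependent bracket $t\,\mathcal{L}$ by substituting the normal coordinate $X_0$ for $t$. This requires unwinding the definition of the bracket on $N^{\mathfrak{A}G}_{\mathfrak{A}H}=\bigsqcup_{h} T_h\mathfrak{A}G/T_h\mathfrak{A}H$ and using that, since $t\,\mathcal{L}$ vanishes to first order along $\mathfrak{A}\mathcal{H}$, its normal class is its $t$-derivative paired with $X_0$, exactly as for the generating functions $\dnc(f)$ of \Cref{smooth structure dnc construction}. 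The remaining steps --- centrality, simple connectedness of the fibres, and the Baker--Campbell--Hausdorff identification --- are formal.
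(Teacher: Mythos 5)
Your strategy is sound and, in its second half, genuinely different from the paper's; but there is one concrete false claim in your verification of the hypothesis of \Cref{prop exact seq}. You assert that $ds=dr$ at every point of $TM\times\{0\}$, citing the tangent-space formula from the proof of \Cref{Functoriality of DNC}. That formula is only valid at points of $V\times\{0\}$, i.e.\ on the zero section, and your assertion fails off the zero section. Concretely, take the tubular embedding $\phi(x,X)=(\exp_xX,x)$ of the diagonal and the chart $\tilde\phi(x,X,t)=(\phi(x,tX),t)$ of $\dnc(M\times M,M)$; then $s\circ\tilde\phi(x,X,t)=(x,t)$ while $r\circ\tilde\phi(x,X,t)=(\exp_x(tX),t)$, so at a point $(x,X_0,0)$ one finds
\begin{equation*}
(dr-ds)(\delta x,\delta X,\delta t)=(X_0\,\delta t,\,0),
\end{equation*}
which is nonzero whenever $X_0\neq 0$. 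What is true --- and all that \Cref{prop exact seq} requires --- is that this image lies in $T_xM\times\{0\}=T_{s(h)}(M\times\{0\})$. The paper gets this in one line: since $\pi_\R\circ r=\pi_\R\circ s$, the image of $dr-ds$ lies in $\ker d\pi_\R$. So your application of \Cref{prop exact seq} survives, but the justification as written is wrong and must be replaced by this (or an equivalent) argument.

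From that point on your route diverges from the paper's and is legitimate. The paper stays at the group level: it constructs an explicit section $\mathfrak{S}_{x_0,t_0}$ of the central extension from curves $f$ tangent to $H$ with $f'(0)=h$, namely $\mathfrak{S}_{x_0,t_0}(h)=\partial_u|_{u=0}\,(f(t_0u),f(0),t_0u)\bmod T(H\times\{0\})$, and reads off the group $2$-cocycle directly from $\mathfrak{S}_{x_0,t_0}(h_1)\mathfrak{S}_{x_0,t_0}(h_2)\mathfrak{S}_{x_0,t_0}(-h_1-h_2)=\frac{t_0}{2}\mathcal{L}(h_1,h_2)$, so the factor $\frac{1}{2}$ appears at once and no integration is needed. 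You instead compute the fibrewise Lie algebra through the Lie algebroid $N^{\mathfrak{A}G}_{\mathfrak{A}(H\times\{0\})}$ given by \Cref{Construction of dnc grouopids}, and integrate by Baker--Campbell--Hausdorff. This works: the adiabatic bracket $[\sigma_X,\sigma_Y]=t[X,Y]$ carries an overall factor of $t$, which simultaneously gives the triviality of the action (your centrality step) and, upon passing to the normal fibre, becomes $X_0\,\mathcal{L}(h,h')$ --- the ``replace $t$ by $X_0$'' step you rightly single out as the crux. It is only sketched in your proposal, but it can be completed exactly as you indicate, using the description of sections of $\dnc$ of a bundle pair as constrained sections of $E\times\R$ (\Cref{sections of dnc algebr}); you also correctly supply the connectedness and simple connectedness of the fibres (from the extension by vector groups) that BCH requires, a point the paper's argument never needs. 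The trade-off: the paper's proof is more elementary and self-contained, whereas your algebroid-plus-BCH mechanism is essentially the one the paper itself deploys for the general Carnot case in \Cref{Heisenberg groupoid sect}, so it generalizes more readily.
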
 
\begin{proof}
First we apply \Cref{prop exact seq} to $\dnc(M\times M,M)\rightrightarrows M\times\R$ and $H\times\{0\}\rightrightarrows M\times \{0\}$. Let us check the condition of \Cref{prop exact seq} and the triviality of the action. \begin{itemize}
 \item Since $\pi_\R\circ r=\pi_\R\circ s$, the condition of \Cref{prop exact seq} is satisfied.
  
\item the triviality of the action is immediate to check. If $X$ is a section of $TM$ over $M\times \R$ which vanishes on $M\times \{0\}$, $Y$ is a section of $TM$ over $M\times \R$ which vanishes on $M\times \{0\}$ and whose $\partial_t$-derivative on $M\times\{0\}$ is in $H$, then the vector field $[X,Y]$ vanishes over $M\times \{0\}$.
\end{itemize} 

The central exact sequence of bundles of Lie groups over $(x_0,t_0)\in N^{M\times\R}_{M\times \{0\}}=M\times \R$ given by \Cref{prop exact seq} is then equal to $$1\to T_{x_0}M/H_{x_0}\to \left(\mathcal{N}^{\dnc(M\times M,M)}_{H\times\{0\}}\right)_{(x_0,t_0)}\to H_{x_0}\to 1.$$

There exists a quite natural section of this exact sequence: let $h\in H_{x_0}$, $f:\R\to M$ any smooth function such that $f(0)=x_0$, $f'(0)=h$ and $f'(t)\in H_{f(t)} \;\forall t$, \begin{align*}
&\sigma_{x_0,t_0}(h,\cdot):\R\to \dnc(M\times M,M)\\
 &\sigma_{x_0,t_0}(h,u)=(f(t_0u),x_0,t_0u) &\text{if}\; t_0u\neq 0\\
 &\sigma_{x_0,t_0}(h,0)=(x_0,h,0)&\text{if}\; t_0u=0
\end{align*}
One then sees immediately that the map \begin{align*}\mathfrak{S}_{x_0,t_0}:H_{x_0}&\to  \left(\mathcal{N}^{\dnc(M\times M,M)}_{H\times\{0\}}\right)_{(x_0,t_0)}\\
 h&\to\left( \frac{\partial}{\partial u}\big|_{u=0}\sigma_{x,t}(h,u)\mod T_{(x_0,h,0)}(H\times \{0\})\right)
\end{align*} is well defined (i.e, doesn't depend on the choice of $f$) and is a section of the above exact sequence.

\bigskip

The map $\mathfrak{S}_{x_0,t_0}$ is not a group homomorphism. For $h_1,h_2\in H_{x_0}$, we have \begin{equation*}
 \mathfrak{S}_{x_0,t_0}(h_1)\mathfrak{S}_{x_0,t_0}(h_2)\mathfrak{S}_{x_0,t_0}(-h_1-h_2)=\frac{t_0}{2}\mathcal{L}(h_1,h_2)\in T_{x_0}M/H_{x_0}.
\end{equation*}
This follows from the definition of $\mathcal{L}.$ See \cref{dfn : levi}.
\end{proof}
\begin{cor}
The fiber of the groupoid $$\dnc^2(M\times M,M,H\times \{0\})\rightrightarrows M\times \R^2$$ over $M\times \{1\}\times \R$ is equal to (as an algebraic groupoid) to $$M\times M\times \R^*\sqcup H\oplus TM/H\times \{0\}\rightrightarrows M\times \R,$$ where the groupoid structure on $M\times M\times \R^*$ is the pair groupoid, and on $H\oplus TM/H$ is the bundle of nilpotent Lie groups $$(h,n)\cdot (h',n')=\left(h+h',n+n'+\frac{1}{2}\mathcal{L}(h,h')\right).$$
\end{cor}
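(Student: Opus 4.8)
The plan is to realise the fiber as the restriction of $\dnc^2(M\times M,M,H\times\{0\})$ to the locus $\{\pi_\R^{(0)}=1\}$, and then to split it according to the second deformation parameter $t_1:=\pi_\R^{(1)}$. First I would observe that $\pi_\R^{(0,1)}$ is a groupoid homomorphism onto $\R^2$, viewed with its trivial (units only) groupoid structure: each rescaled parameter is constant on composable pairs, exactly as $\pi_\R$ is constant along the multiplication of the tangent groupoid of \Cref{examps dnc}, and this persists under $\dnc$ by functoriality. Consequently the fibers of $\pi_\R^{(0,1)}$ are subgroupoids, and the preimage of $M\times\{1\}\times\R$ is a subgroupoid over $M\times\R$ whose unit coordinate is $t_1$. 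It then suffices to describe this subgroupoid and its multiplication over the two pieces $t_1\neq 0$ and $t_1=0$.

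For $t_1\neq 0$, the outer $\dnc$ contributes nothing new: a point is a pair $(z,t_1)$ with $z\in\dnc(M\times M,M)$, and the identification of \Cref{trivializing dnc Rn} gives $\pi_\R^{(0)}(z,t_1)=\pi_\R(z)/t_1$. Imposing $\pi_\R^{(0)}=1$ thus forces $\pi_\R(z)=t_1\neq 0$, so that $z$ lies in the pair groupoid part $M\times M\times\{t_1\}$ of \Cref{examps dnc} and not in $TM\times\{0\}$. The multiplication inherited from $\dnc(M\times M,M)$ at a nonzero parameter is precisely the pair groupoid law $(x,y,t_1)\cdot(y,w,t_1)=(x,w,t_1)$, so this part is $M\times M\times\R^*$ with its pair groupoid structure.

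For $t_1=0$, the restriction is exactly the Lie groupoid $\mathcal{N}^{\dnc(M\times M,M)}_{H\times\{0\}}$ cut out by $\pi_\R^{(0)}=1$. The crucial point is to match parameters: the base coordinate $t$ of $\mathcal{N}^{\dnc(M\times M,M)}_{H\times\{0\}}\rightrightarrows N^{M\times\R}_{M\times\{0\}}=M\times\R$ occurring in \Cref{main thm H single} coincides with $\pi_\R^{(0)}$, because the normal direction to $M\times\{0\}$ inside $M\times\R$ is the $\pi_\R$-direction and $\pi_\R^{(0)}$ is by construction $d\pi_\R$ applied to that normal vector. Fixing $\pi_\R^{(0)}=1$ therefore extracts the fiber $t=1$ of the bundle of Lie groups of \Cref{main thm H single}, namely $H\oplus TM/H$ with the law $(h,n)\cdot(h',n')=(h+h',\,n+n'+\tfrac12\mathcal{L}(h,h'))$.

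Assembling the two pieces over $M\times\R$ yields the stated groupoid. Since the claim concerns only the algebraic structure, no verification of the smooth gluing along $t_1=0$ is needed (this is the content of \Cref{sect H bundle 1 case}). The only real obstacle is the bookkeeping of the two $\R^*$-rescalings: one must check carefully that the parameter governing the coefficient of $\mathcal{L}$ in \Cref{main thm H single} is $\pi_\R^{(0)}$, since this is what converts the factor $\tfrac{t}{2}$ into $\tfrac12$ once $\pi_\R^{(0)}=1$ is imposed. With this identification in place, both pieces and their multiplications follow immediately from \Cref{main thm H single} and from the description of $\dnc$ at a nonzero parameter.
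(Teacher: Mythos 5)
Your proposal is correct and takes essentially the same route as the paper: the corollary is obtained directly from \Cref{main thm H single} by exactly the decomposition you describe, namely the pair groupoids $M\times M\times\{t_1\}$ on the locus $\pi_\R^{(1)}\neq 0$ (forced by $\pi_\R(z)=t_1\neq 0$) and the restriction of $\mathcal{N}^{\dnc(M\times M,M)}_{H\times\{0\}}$ to $\{N\pi_\R=1\}$ on the locus $\pi_\R^{(1)}=0$. Your bookkeeping point — that the coordinate $t$ appearing in the coefficient $\tfrac{t}{2}\mathcal{L}$ of \Cref{main thm H single} is the normal coordinate $N\pi_\R=\pi_\R^{(0)}$, so that setting it equal to $1$ yields $\tfrac{1}{2}\mathcal{L}$ — is precisely the identification the paper relies on implicitly.
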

Since $H$ is $\R^*$ invariant, by \Cref{Dnc iterated sect} we have two group actions $\lambda^1$, $\lambda^0$ of $\R^*$ on $\mathcal{N}^{\dnc(M\times M,M)}_{H\times\{0\}}$. Under the above identification the two actions $\lambda^1$ and $\lambda^0$ become $$\lambda^0_u(h,n,t)=(\frac{h}{u},\frac{n}{u},ut),\quad \lambda^1_u(h,n,t)=(h,\frac{n}{u},\frac{t}{u}).$$
\subsection{Local charts for $N^{\dnc(M,V)}_{H\times\{0\}}$}\label{sect H bundle 1 case}
In this section the development done in section 2.1 at the level of Lie algebroids is done in parallel at the level of local charts. This is more general as it applies to $N^{\dnc(M,V)}_{H\times\{0\}}$ which is in general only a smooth manifold.

Let $M$ be a smooth manifold, $V$ a submanifold, $H\subseteq N^M_V$ a smooth subbundle, $\mathcal{H}$ the lift of $H$ to $TM$. In other words $\mathcal{H}$ is a subbundle of the restriction of $TM$ to $V$ such that $TV\subseteq \mathcal{H}$ and $H=\mathcal{H}/TV$. In this section we give an alternate description of the fiber $\left(\pi^{(0,1)}\right)^{-1}(\{1\}\times \R)$ of the space $\dnc^2(M,V,H\times \{0\})$.

\begin{dfn}Let $\tilde{N}^M_{V,H}$ the set of smooth functions $f:\R\to M$ such that $ f(0)\in V$ and $f'(0)\in \mathcal{H}_{f(0)}$. 

Let $N^M_{V,H}$ be the quotient of $\tilde{N}^M_{V,H}$ by the equivalence relation where $f,g\in \tilde{N}^M_{V,H}$ are equivalent if and only if \begin{enumerate}
\item $f(0)=g(0)$
\item  $f'(0)-g'(0)\in T_{f(0)}V$.
\item  for every smooth function $l:M\to \R$ which vanishes on $V$ and whose derivative $dl$ vanishes on $\mathcal{H}$, one has $(l\circ f)''(0)=(l\circ g)''(0)$.
\end{enumerate}\end{dfn}
Let $\pi_\R:\dnc(M,V)\to \R$ be the projection. Since $\pi_\R(H)=0$, the map $N\pi_\R:N^{\dnc(M,V)}_H\to N^\R_{0}=\R$ is well defined. We claim that the set $N^M_{V,H}$ is in a natural bijection with $\left(N\pi_\R\right)^{-1}(1).$ To see this let $f\in \tilde{{N}}^M_{V,H}$. Since $f(0)\in V$, the function \begin{align*}
\dnc(f):\R\to \dnc(M,V),\quad t\to (f(t),t),\text{if}\,t\neq 0,\; 0\to (f'(0),0)
\end{align*}
is smooth. In the previous formula instead of the domain being $\dnc(\R,0),$ we replace the domain with $\R$ using the inclusion \begin{align*}
\R\to \dnc(\R,0)\\
t\to (t,t)\\
0\to (1,0).
\end{align*} Since $f'(0)\in H$ it follows that $
 \dnc^2(f):\R\to \dnc^2(M,V,H)$ is a welll defined smooth map. Its value at zero is an element in $N^{\dnc(M,V)}_H$ which is clearly in $\left(N\pi_\R\right)^{-1}(1)$. \begin{prop}the map \begin{align*}\label{map description}
\beta:N^M_{V,H}\to \left(N\pi_\R\right)^{-1}(\{1\}),\quad [f]\to [\dnc(f)]
\end{align*}
is a well defined bijection
 \end{prop}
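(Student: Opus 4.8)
The plan is to characterize both sides by the same invariants and check that $\beta$ respects them. I would start from the general fact that an element of a normal bundle $N^X_Y=TX|_Y/TY$ based at $y_0\in Y$ is determined by its base point together with the pairings $dF_{y_0}(\cdot)$, where $F$ ranges over smooth functions vanishing on $Y$ (the differentials $dF_{y_0}$ span the conormal $(T_{y_0}Y)^0$, and $T_{y_0}Y=\bigcap_F\ker dF_{y_0}$). Applied to $X=\dnc(M,V)$, $Y=H$ and to the normal vector $[\dnc(f)]=\dnc^2(f)(0)=[\tfrac{d}{dt}|_{t=0}\dnc(f)]$, this gives $dF([\dnc(f)])=\tfrac{d}{dt}|_{t=0}F(\dnc(f)(t))$ for every such $F$. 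Note first that the base point is $\dnc(f)(0)=(f(0),\,f'(0)\bmod T_{f(0)}V,\,0)\in H$, so equality of base points for $f,g$ is exactly conditions $(1)$ and $(2)$ of the equivalence relation.

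Next I identify the generators. By \Cref{smooth structure dnc construction} the smooth structure of $\dnc(M,V)$ is generated by $(\pi_M,\pi_\R)$ and by $\dnc(l)$ for $l\in C^\infty(M)$ vanishing on $V$, so their differentials span the cotangent space at each point, and the ones among them vanishing on $H$ span the conormal $(TH)^0$. Since $\dnc(l)(x,X,0)=dl_x(X)$ and $H=\mathcal{H}/TV$, the generators vanishing on $H$ are precisely $\pi_\R$ together with those $\dnc(l)$ for which $dl|_{\mathcal{H}}=0$. I would verify this in coordinates $(x,y,z)$ adapted to the flag $TV\subseteq\mathcal{H}\subseteq TM|_V$, with $V=\{y=z=0\}$, $\mathcal{H}=\mathrm{span}(\partial_x,\partial_y)$ and $z=(z_1,\dots,z_r)$ transverse to $\mathcal{H}$: there $\dnc(z_i)$ and $\pi_\R$ restrict to the coordinates $(\zeta_i,t)$ transverse to $H$, whose differentials span the conormal.

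Now I compute the pairings along $\dnc(f)$ for $f\in\tilde{N}^M_{V,H}$. For $\pi_\R$ one has $\pi_\R(\dnc(f)(t))=t$, so the derivative is $1$; this is automatic and merely records that $[\dnc(f)]\in(N\pi_\R)^{-1}(1)$. For $\dnc(l)$ with $l|_V=0$ and $dl|_{\mathcal{H}}=0$, the facts $(l\circ f)(0)=0$ and $(l\circ f)'(0)=dl_{f(0)}(f'(0))=0$ show $l\circ f$ vanishes to second order, whence $\dnc(l)(\dnc(f)(t))=(l\circ f)(t)/t=\tfrac{t}{2}(l\circ f)''(0)+o(t)$ and $\tfrac{d}{dt}|_{t=0}\dnc(l)(\dnc(f)(t))=\tfrac12(l\circ f)''(0)$. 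Comparing $f$ and $g$, the equalities $dF([\dnc f])=dF([\dnc g])$ over all generators $F$ vanishing on $H$ hold exactly when $f(0)=g(0)$, $f'(0)-g'(0)\in T_{f(0)}V$, and $(l\circ f)''(0)=(l\circ g)''(0)$ for all such $l$ — that is, exactly when $f\sim g$. This proves simultaneously that $\beta$ is well defined and injective.

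For surjectivity, given $\nu\in(N\pi_\R)^{-1}(1)$ with base point $(x_0,h,0)\in H$, I realize $\nu$ by a curve: in the adapted coordinates $\nu$ is encoded by $x_0\in V$, the class $h\in H$ (the $\eta$-coordinates) and transverse data $\dot\zeta_0\in\R^r$, so I take any $f$ with $f(0)=x_0$, $f'(0)$ a lift of $h$ to $\mathcal{H}_{x_0}$, and second-order part chosen so that $(z_i\circ f)''(0)=2(\dot\zeta_0)_i$; then $\beta([f])=\nu$ by the computation above. The main obstacle — the only place where genuine work is needed — is the spanning claim in the second step, i.e.\ that the generators vanishing on $H$ already exhaust the conormal of $H$ (equivalently, that $H$ is cut out in $\dnc(M,V)$ by $\pi_\R$ and the $\dnc(l)$ with $dl|_{\mathcal{H}}=0$, with independent differentials). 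This is what reduces the abstract normal-bundle comparison to the three explicit conditions defining the equivalence relation, and it is exactly the content checked by the adapted local coordinates.
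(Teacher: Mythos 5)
Your proof is correct and follows essentially the same route as the paper: both arguments use the two generating families of smooth functions on $\dnc(M,V)$ (namely $(\pi_M,\pi_\R)$ and the $\dnc(l)$ for $l$ vanishing on $V$) to detect when the normal classes of $\dnc(f)$ and $\dnc(g)$ agree — your pairing computation $\tfrac{d}{dt}\big|_{t=0}\dnc(l)(\dnc(f)(t))=\tfrac12 (l\circ f)''(0)$ is exactly the "regarding each type" step the paper leaves implicit — and both obtain surjectivity from the local chart adapted to the flag $TV\subseteq\mathcal{H}\subseteq TM|_V$. Your write-up merely makes explicit the conormal-spanning argument and the factor-of-$\tfrac12$ bookkeeping that the paper compresses into two sentences.
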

 Let us remark that the map $\beta$ is not a linear map and in fact the space $N^M_{V,H}$ is not a vector bundle. 
 \begin{proof}
   In \Cref{dnc section}, two types of functions on $\dnc(M,V)$ were described which generate the ring of smooth functions on $\dnc(M,V).$ By regarding each type we see that for two functions $f,g\in \tilde{N}^M_{V,H}$, the classes in   $N^{\dnc(M,V)}_{H}$ of $\dnc(f)$ and $\dnc(g)$ are equal if and only if the classes of $f$ and $g$ are equal in $N^M_{V,H}$. Hence $\beta$ is well defined and injective. Surjectivity follows by looking at a local chart as described below.
\end{proof}   
    Let $\psi:N^M_V\to M$ be a tubular neighbourhood embedding, $L:H\oplus N^M_V/H\to N^M_V$ a linear isomorphism given by the choice of a complementary subbundle of $H$ inside $N^M_V$, $\phi=\psi\circ L$.

By the local charts descriped in \Cref{dnc section}, the following is a local chart for $\dnc(M,V)$: \begin{align*}
&\tilde{\phi}:H\oplus N^M_V/H\times \R\to \dnc(M,V)\\
&(h,n,t)\to (\phi(th,tn),t),\quad t\neq 0\\
&(h,n,0)\to (L(h,n,0),0).
\end{align*}
Therefore the following is a local chart for $\dnc^2(M,V,H\times\{0\})$
 \begin{align*}
&H\oplus N^M_V/H\times \R\times\R\to \dnc^2(M,V,H\times\{0\})\\
&(h,n,t,u)\to (\phi(uth,u^2tn),ut,u)\in M\times \R^*\times \R^*\quad t\neq 0,u\neq 0\\
&(h,n,0,u)\to (L(h,un),0,u)\in N^M_V\times \{0\}\times \{u\},\quad u\neq 0\\
&(h,n,t,0)\to (h,n,t,0)\in N^{\dnc(M,V)}_H\times\{0\},
\end{align*}
where in the last identity we identified $N^{\dnc(M,V)}_H$ with  $H\oplus N^M_V\oplus \R$ using $\tilde{\phi}.$ In this local picture, $\pi^{(0,1)}$ is the projection $(h,n,t,u)\to (t,u).$

\bigskip

Let $$\dnc_H(M,V):=M\times \R^*\sqcup N^M_{V,H}\times\{0\}.$$  We equip $\dnc_H(M,V)$ with a smooth structure by identifying it with $\left(\pi_\R^{(0,1)}\right)^{-1}(\{1\}\times \R)$ using the map $\beta$. Its local charts are hence given by \begin{align*}
H\oplus N^M_V/H\times \R\to \dnc_H(M,V)\\
(h,n,u)\to (\phi(uh,u^2n),u),\quad u\neq 0\\
(h,n,0)\to ([t\mapsto \phi(L(th,t^2n))],0).
\end{align*}
The space $\dnc_H(M,V)$ is called \textit{the deformation to the normal cone of $M$ along $V$ with weight $H$.}

\begin{rem}\label{rem all fibers are same}All the other fibers $\left(\pi^{(0,1)}_\R\right)^{-1}(\{t\}\times \R)$ for $t\neq 0$ are isomorphic to $\left(\pi^{(0,1)}_\R\right)^{-1}(\{1\}\times \R)$ by a rescaling in the $u$-variable. The fiber $\left(\pi^{(0,1)}_\R\right)^{-1}(\{0\}\times \R)$ is equal to $\dnc(N^M_V,H).$ In particular the space $\dnc^2(M,V,H)$ should be seen as a deformation of the space $\dnc_H(M,V)$ to the simpler space $\dnc(N^M_V,H).$
\end{rem}
Since $H$ is $\R^*$-invariant, by \Cref{Dnc iterated sect} it follows that there is an $(\R^*)^2$ action on $\dnc^2(M,V,H\times\{0\}).$ It follows from \Cref{relation projection and action} in \Cref{Dnc iterated sect} that $\left(\pi_\R^{(0,1)}\right)^{-1}(\{1\}\times \R)$ is invariant under the diagonal $\lambda^{(1)}_u\lambda^{(0)}_u$. This action is described by $u\cdot (x,t)=(x,tu)$ and $u\cdot ([f],0)=([f(\frac{\cdot}{u})],0)$ for $f\in \tilde{N^M_{V,H}}.$
 \begin{cor}\label{naturality dnc single H}Let $(M,V), (M',V')$ be smooth manifold pairs, $H\subseteq N^M_V, H'\subseteq N^{M'}_{V'}$ subbundles, $g:M\to M'$ a smooth map  such that  $g(V)\subseteq V'$ 
and $dg(H)\subseteq H'$. Then the maps \begin{itemize}
\item $\begin{aligned}[t]
 Ng:N^M_{V,H}\to N^{M'}_{V',H'}\quad [f]\to [g\circ f]
\end{aligned} $
\item $\begin{aligned}[t]
\dnc(g):\dnc_H(M,V)\to \dnc_{H'}(M',V')\\
(x,t)\to (g(x),t)\\
([f],0)\to ([g\circ f],0)
\end{aligned}$
\end{itemize}
are well defined and smooth.
\end{cor}
\begin{proof}
This is a corollary of \Cref{Functoriality of DNC} applied twice and the identification of $\dnc_H(M,V)$ with $\left(\pi^{(0,1)}_\R\right)^{-1}(\{1\}\times \R)\subseteq \dnc^2(M,V,H\times\{0\})$.
\end{proof}
\begin{prop}\label{fibered product dnc commute single H}Let $M_1, M_2, M$ be manifolds, $V_i\subseteq M_i,V\subseteq M$ submanifolds, $H_i\subseteq N^{M_i}_{V_i}$, $H\subseteq N^M_V$ vector subbundles, $f_i:M_i\to M$ smooth maps such that \begin{enumerate}
\item $f_i(V_i)\subseteq V$
\item the maps $f_i:M_i\to M$ are transverse
\item the maps $f_i|V:V_i\to V$ are transverse
\item  $H=df_1(H_1)+df_2(H_2)$,
\end{enumerate} then \begin{enumerate}
\item the maps $\dnc(f_i):\dnc_{H_i}(M_i,V_i)\to \dnc_H(M,V)$ are transverse.
\item  the natural map $$\dnc_{H_1\times_H H_2}(M_1\times_M M_2,V_1\times
_V V_2)\to\dnc_{H_1}(M_1,V_1)\times_{\dnc_H(M,V)}\dnc_{H_2}(M_2,V_2)$$  is a diffeomorphism.
\end{enumerate}
\end{prop}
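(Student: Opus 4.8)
The plan is to obtain both statements by applying the fibered-product property \Cref{DNC fibered product} twice, in the iterated form of \Cref{Dnc iterated sect}, and then to restrict everything to the fibre over $\{1\}\times\R$ of $\pi^{(0,1)}_\R$, using the identification $\dnc_H(M,V)=\left(\pi^{(0,1)}_\R\right)^{-1}(\{1\}\times\R)$ established above (and likewise for $M_1,M_2$ and for the fibered product).

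First I would apply \Cref{DNC fibered product} to $f_1,f_2$ and the submanifolds $V_i\subseteq M_i$, $V\subseteq M$. Hypotheses (1)--(3) are exactly its assumptions, so the maps $\dnc(f_i)$ are transverse and there are canonical diffeomorphisms $\dnc(M_1\times_MM_2,V_1\times_VV_2)\cong\dnc(M_1,V_1)\times_{\dnc(M,V)}\dnc(M_2,V_2)$ and $N^{M_1\times_MM_2}_{V_1\times_VV_2}\cong N^{M_1}_{V_1}\times_{N^M_V}N^{M_2}_{V_2}$. Under the latter, $(H_1\times_HH_2)\times\{0\}$ is carried to $(H_1\times\{0\})\times_{H\times\{0\}}(H_2\times\{0\})$, sitting inside the $t=0$ fibre of $\dnc(M_1,V_1)\times_{\dnc(M,V)}\dnc(M_2,V_2)$; in particular condition (4) is what guarantees that $H_1\times_HH_2$ is a genuine subbundle, just as condition (2) guarantees that $M_1\times_MM_2$ is a manifold.

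Next I would apply \Cref{DNC fibered product} a second time, to the transverse maps $\dnc(f_i)\colon\dnc(M_i,V_i)\to\dnc(M,V)$ together with the submanifolds $H_i\times\{0\}$ and $H\times\{0\}$. The inclusion $\dnc(f_i)(H_i\times\{0\})\subseteq H\times\{0\}$ holds because $df_i(H_i)\subseteq H$ (a consequence of (4)), and transversality of the $\dnc(f_i)$ is already available. The one genuinely new hypothesis, and the crux of the argument, is that the restrictions $\dnc(f_i)|_{H_i\times\{0\}}$ are transverse into $H\times\{0\}$. On the $t=0$ fibre this restriction is the bundle map $Nf_i|_{H_i}\colon H_i\to H$ lying over $f_i|_{V_i}\colon V_i\to V$. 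At a common image point, the base directions of $TH$ are reached because $f_1|_{V_1},f_2|_{V_2}$ are transverse (condition (3)), while the fibre directions $H_v$ are reached because $H=df_1(H_1)+df_2(H_2)$ (condition (4)); together these give the required transversality. Granting it, the iterated form of \Cref{DNC fibered product} yields that the maps $\dnc^2(f_i)$ are transverse and that
$$\dnc^2(M_1\times_MM_2,V_1\times_VV_2,(H_1\times_HH_2)\times\{0\})\cong\dnc^2(M_1,V_1,H_1\times\{0\})\times_{\dnc^2(M,V,H\times\{0\})}\dnc^2(M_2,V_2,H_2\times\{0\}).$$

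Finally I would restrict to the fibre of $\pi^{(0,1)}_\R$ over $\{1\}\times\R$. Since $\pi_\R\circ\dnc(f_i)=\pi_\R$, functoriality gives $\pi^{(0,1)}_\R\circ\dnc^2(f_i)=\pi^{(0,1)}_\R$, so each $\dnc^2(f_i)$ preserves these fibres and the displayed diffeomorphism restricts to one between them; under the identification $\dnc_H(M,V)=\left(\pi^{(0,1)}_\R\right)^{-1}(\{1\}\times\R)$ this is exactly the diffeomorphism of statement (2). For statement (1), I would note that, through the above identification, $\pi^{(0,1)}_\R$ is a submersion on the fibered product $\dnc^2(M_1\times_MM_2,\ldots)$; hence the map $(x_1,x_2)\mapsto\pi^{(0,1)}_\R(x_1)$ on $\dnc^2(M_1,V_1,H_1\times\{0\})\times_{\dnc^2(M,V,\ldots)}\dnc^2(M_2,V_2,H_2\times\{0\})$ is a submersion, so its fibre over $\{1\}\times\R$ is cut out transversally, which is precisely the transversality of $\dnc(f_i)\colon\dnc_{H_i}(M_i,V_i)\to\dnc_H(M,V)$. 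I expect the verification of the bundle transversality of $Nf_i|_{H_i}$---the precise way condition (4) supplies the vertical directions---to be the main technical point, the passage to the fibre being formal once the submersivity of $\pi^{(0,1)}_\R$ is in hand.
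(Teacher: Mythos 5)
Your proposal is correct and follows essentially the same route as the paper, whose entire proof is ``apply \Cref{DNC fibered product} twice and use the identification of $\dnc_H(M,V)$ with $\left(\pi^{(0,1)}_\R\right)^{-1}(\{1\}\times\R)\subseteq\dnc^2(M,V,H\times\{0\})$''---exactly your strategy. The details you supply, which the paper leaves implicit, are sound: hypothesis (4) gives both $Nf_i(H_i)\subseteq H$ and the vertical directions, hypothesis (3) gives the base directions, so the restrictions to $H_i\times\{0\}$ are transverse and the second application of \Cref{DNC fibered product} is legitimate; the passage to the fibre over $\{1\}\times\R$ then works as you describe (your final step for statement (1) deserves one more line of linear algebra---given $c\in\ker d\pi^{(0,1)}_\R$ written as $dg_1(a_1)+dg_2(a_2)$, correct the $a_i$ by an element of the tangent space of the fibered product mapping to $d\pi^{(0,1)}_\R(a_1)$, using submersivity of $\pi^{(0,1)}_\R$ on the fibered product---but this is routine and your claim is true).
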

\begin{proof}
This is a corollary of \Cref{DNC fibered product} applied twice and the identification of $\dnc_H(M,V)$ with $\left(\pi^{(0,1)}_\R\right)^{-1}(\{1\}\times \R)\subseteq \dnc^2(M,V,H\times\{0\})$. 
\end{proof}
\begin{theorem}\label{dnc groupoid is a groupoid single H}
Let $G\rightrightarrows G^0$ be a groupoid, $G'\rightrightarrows G^{'0}$ a subgroupoid, $H\subseteq \mathcal{N}^G_{G'}$ a $\mathcal{VB}$-subgroupoid \cite{MR941624,MR896907}. Then \begin{enumerate}
\item the space $\mathcal{N}^G_{G',H}\rightrightarrows N^{G^0}_{G^{'0},H^0}$ is a Lie groupoid whose algebroid is equal to $N^{\mathfrak{A}G}_{\mathfrak{A}G',\mathfrak{A}H}.$
\item  the space $\dnc_H(G,G')\rightrightarrows\dnc_{H^0}(G^0,G^{'0})$ is a Lie groupoid whose Lie algebroid is equal to $\dnc_{\mathfrak{A}H}(\mathfrak{A}G,\mathfrak{A}G').$
\end{enumerate}
\end{theorem}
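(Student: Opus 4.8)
The plan is to deduce Theorem~\ref{dnc groupoid is a groupoid single H} from the groupoid structure already established on the iterated deformation $\dnc^2$, exactly as \Cref{naturality dnc single H} and \Cref{fibered product dnc commute single H} were deduced from the functoriality and fibered-product statements of \Cref{dnc section}. The key observation is that $\dnc_H(G,G')$ was \emph{defined} as the fiber $\left(\pi_\R^{(0,1)}\right)^{-1}(\{1\}\times\R)$ inside $\dnc^2(G,G',H\times\{0\})$, and by the induction remark at the end of \Cref{Dnc iterated sect}, the latter is a Lie groupoid over $\dnc^2(G^0,G^{'0},H^0\times\{0\})$ whenever $G'\subseteq G$ and $H\subseteq\mathcal{N}^G_{G'}$ are Lie subgroupoids. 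So first I would verify that $H$ being a $\mathcal{VB}$-subgroupoid of $\mathcal{N}^G_{G'}$ is precisely the hypothesis needed to apply \Cref{Construction of dnc grouopids} at the second stage of the iteration, making $\dnc(\dnc(G,G'),H\times\{0\})$ a Lie groupoid.

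Second, I would check that the fiber of $\pi_\R^{(0,1)}$ over $\{1\}\times\R$ is a Lie subgroupoid. The structure maps of $\dnc^2(G,G',H\times\{0\})$ are $\dnc^2(s)$ and $\dnc^2(r)$; because $s,r$ cover the identity on the $\R$-directions used to build $\pi_\R^{(0,1)}$, the projection $\pi_\R^{(0,1)}$ is compatible with the source and target, i.e. $\pi_\R^{(0,1)}\circ\dnc^2(s)=\pi_\R^{(0,1)}\circ\dnc^2(r)=\pi_\R^{(0,1)}$ on objects in the appropriate sense. Hence $\left(\pi_\R^{(0,1)}\right)^{-1}(\{1\}\times\R)$ is saturated under the groupoid operations and inherits a Lie groupoid structure over $\dnc_{H^0}(G^0,G^{'0})=\left(\pi_\R^{(0,1)}\right)^{-1}(\{1\}\times\R)$ in the base. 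This gives statement~2; statement~1 follows identically by taking the $t=0$ slice, i.e. working with the normal-bundle groupoid $\mathcal{N}^G_{G'}$ in place of $\dnc(G,G')$ and applying \Cref{Construction of dnc grouopids} part~3 with $H$ as the distinguished subgroupoid.

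For the Lie algebroid identifications, I would use that the Lie algebroid functor commutes with $\dnc$: by \Cref{Construction of dnc grouopids} the algebroid of $\dnc(G,G')$ is $\dnc(\mathfrak{A}G,\mathfrak{A}G')$, and iterating, the algebroid of $\dnc^2(G,G',H\times\{0\})$ is $\dnc^2(\mathfrak{A}G,\mathfrak{A}G',\mathfrak{A}H\times\{0\})$. Restricting to the $\{1\}\times\R$ fiber on the algebroid level and comparing with the definition of $\dnc_H$ yields $\dnc_{\mathfrak{A}H}(\mathfrak{A}G,\mathfrak{A}G')$, and similarly $N^{\mathfrak{A}G}_{\mathfrak{A}G',\mathfrak{A}H}$ for the normal-bundle case; here one uses that differentiation commutes with taking a transverse fiber, which is exactly what the fibered-product compatibility of \Cref{DNC fibered product} encodes.

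The main obstacle will be the bookkeeping in the second step: confirming that $\left(\pi_\R^{(0,1)}\right)^{-1}(\{1\}\times\R)$ is genuinely a Lie \emph{subgroupoid} and not merely a subset closed under multiplication. One must see that the fiber of $\pi_\R^{(0,1)}$ over $\{1\}\times\R$ in the arrow space maps under $\dnc^2(s),\dnc^2(r)$ onto the fiber over $\{1\}\times\R$ in the object space, so that source and target restrict correctly; this is where the relation \eqref{relation projection and action} between $\pi_\R^{(0,1)}$ and the two $\R^*$-actions, together with the fact that $s$ and $r$ act as the identity on the deformation parameters, does the real work. Once that compatibility is pinned down, everything else is a transcription of the results of \Cref{dnc section} through the identification furnished by $\beta$.
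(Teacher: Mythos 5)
Your proposal is correct, but it is organized differently from the paper's own proof, which is a one\--line deduction: the paper proves this theorem by the same template as \Cref{Construction of dnc grouopids}, namely it treats $\dnc_H(\cdot,\cdot)$ as a construction that is functorial (\Cref{naturality dnc single H}) and compatible with transverse fibered products (\Cref{fibered product dnc commute single H}), and then applies this package to the structure maps of $(G,G',H)$ --- the fibered\--product statement is what makes the multiplication $\dnc_H(G\times_{G^0}G,\dots)\to \dnc_H(G,G')$ meaningful, and the algebroid identification comes from feeding $(\mathfrak{A}G,\mathfrak{A}G',\mathfrak{A}H)$ through the same machine. You instead work upstairs: $\dnc^2(G,G',H\times\{0\})$ is a Lie groupoid by the iterated form of \Cref{Construction of dnc grouopids} (and your observation that the $\mathcal{VB}$\--subgroupoid hypothesis is exactly what makes $H\times\{0\}$ a Lie subgroupoid of $\dnc(G,G')$ is right), and you then cut down to the fiber $\left(\pi_\R^{(0,1)}\right)^{-1}(\{1\}\times\R)$, which is saturated because $\pi_\R^{(0,1)}$ intertwines source and target; note that this saturation follows from functoriality alone, $\dnc(\pi_\R)\circ \dnc^2(s)=\dnc(\pi_\R\circ\dnc(s))=\pi_\R^{(0,1)}$, and does not really need \eqref{relation projection and action}. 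What your route buys: it bypasses \Cref{fibered product dnc commute single H} entirely, and it is literally the argument the paper itself uses later in \Cref{Heisenberg groupoid sect}, where $\dnc_{H^1,\dots,H^k}(M\times M,M)$ is defined as the fiber over $M\times\{1\}\times\R$ of a groupoid deformation and declared ``clearly a Lie groupoid''; your proof makes that step explicit. What the paper's route buys: uniformity and brevity --- any construction enjoying functoriality and fibered products yields groupoids by the same two\--line argument, with no separate discussion of saturated fibers, and the algebroid statement comes along automatically.

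Two of your citations are off, though neither breaks the argument. For statement 1 you invoke part 3 of \Cref{Construction of dnc grouopids}; that part concerns restricting $N^G_H$ over a Lie subgroupoid $K\subseteq H$ of its base, which is not the situation here: $\mathcal{N}^G_{G',H}$ is the fiber $\left(N\pi_\R\right)^{-1}(1)$ of $\mathcal{N}^{\dnc(G,G')}_{H\times\{0\}}$ (a Lie groupoid by part 1), taken in the normal direction, so the correct tool is the same saturation argument you used for statement 2 --- equivalently, statement 1 is the restriction of the statement\--2 groupoid to the saturated zero fiber of its projection to $\R$. Likewise, ``differentiation commutes with taking a transverse fiber'' is not \Cref{DNC fibered product}; the clean justification is that $\pi_\R^{(0,1)}$ is a groupoid morphism to the unit groupoid $\R^2\rightrightarrows\R^2$, so over the saturated object subspace the source fibers of the restricted groupoid coincide with those of the ambient one, whence its Lie algebroid is the restriction --- i.e.\ the fiber over $\{1\}\times\R$ --- of $\dnc^2(\mathfrak{A}G,\mathfrak{A}G',\mathfrak{A}H\times\{0\})$, which is $\dnc_{\mathfrak{A}H}(\mathfrak{A}G,\mathfrak{A}G')$ by definition.
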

\begin{proof}
This is a corollary of \Cref{naturality dnc single H} and \Cref{fibered product dnc commute single H}.
\end{proof}
\begin{examp}\label{example H foliation single}Let $F\subseteq TM$ be an integrable subundle. We regard the foliation groupoid $\mathcal{G}(M,F)\rightrightarrows M$ as an immersed subgroupoid of $M\times M\rightrightarrows M$ by the map $$(x,[\gamma],y)\to (x,y).$$ This map is not injective but the Lie groupoid $\dnc(M\times M,\mathcal{G}(M,F))\rightrightarrows M\times \R$ is still well defined by \cref{rem nonhaus}.  Its underlying manifold is a second countable locally Hausdorff manifold.

\bigskip

 The vector bundle $TM/F$ will be denoted by $\nu(F).$  If $\gamma:[0,1]\to M$ is path tangent to the leaves, then its holonomy defines a map $d\gamma:\nu(F)_{\gamma(0)}\to \nu(F)_{\gamma(1)}$. One then sees that the groupoid $$\mathcal{N}^{M\times M}_{\mathcal{G}(M,F)}=\{(x,[\gamma],y,X):(x,[\gamma],y)\in \mathcal{G}(M,F),X\in \nu(F)_{y}\}\rightrightarrows M.$$The product is then given by $$(x,[\gamma],y,X)\cdot (y,[\gamma'],z,Y)=(x,[\gamma\gamma'],z,d\gamma'(X)+Z).$$ Let  $H\subseteq \nu(F)$ be a holonomy invariant subbundle, i.e such that for any leafwise path $\gamma:[0,1]\to M$, one has $d\gamma(H_{\gamma(0)})=H_{\gamma(1)}$. It follows that $$L:=\{(x,[\gamma],y,X)\in \mathcal{N}^{M\times M}_{\mathcal{G}(M,F)}:X\in H_y\}\subseteq \mathcal{N}^{M\times M}_{\mathcal{G}(M,F)}$$ is a Lie subgroupoid. The groupoid $$\mathcal{N}^{M\times M}_{\mathcal{G}(M,F),L}=\{(x,[\gamma],y,X,Y):X\in H_y,Y\in \nu(F)_{y}\}\rightrightarrows M$$ has then the groupoid law $$(x,[\gamma],y,X,Y)\cdot (y,[\gamma'],z,X',Y')=(x,[\gamma\gamma'],z,d\gamma'(X)+X',d\gamma'(Y)+Y'+\frac{1}{2}\mathcal{L}(d\gamma'(X),X')),$$where $\mathcal{L}:H\times H\to v(F)/H$ is a Levi form defined similarly to the one defined in \Cref{M single H examp}.
\end{examp}

\section{Carnot Groupoid}\label{Heisenberg groupoid sect}
A more general groupoid will be constructed starting from the following data: Let $M$ be a smooth manifold, $0=H^0\subseteq H^1\subseteq \cdots\subseteq H^{k+1}=TM$ be vector bundles such that $$[\Gamma^\infty(H^i),\Gamma^\infty(H^j)]\subseteq \Gamma^\infty(H^{i+j}),$$ where $H^{i}=TM$ for $i>k$. We will calculate the Lie algebroid of this groupoid and hence show that it is equal to the groupoid constructed in \cite{MR2417549,Choi:2015aa,Choi:2017aa,Ponge:2017aa,Erp:2016aa}. See remark 3.2.3 for more details.

\bigskip

 Since $[\Gamma^\infty(H^i),\Gamma^\infty(H^i)]\subseteq \Gamma^\infty(H^{i+j})$, it follows that the map \begin{align*}
\Gamma^\infty(H^i/H^{i-1})\times \Gamma^\infty(H^{j}/H^{j-1})\to \Gamma^\infty(H^{i+j}/H^{i+j-1})\\
(X,Y)\to [X,Y]\mod \Gamma^\infty(H^{i+j-1}) 
\end{align*} is a $C^\infty(M)$-bilinear map, hence it comes from an antisymmetric bilinear map $$\mathcal{L}:H^{i}/H^{i-1}\times H^{j}/H^{j-1}\to H^{i+j}/H^{i+j-1}.$$

For each $a\in M$, the map $\mathcal{L}$ defines the structure of a Lie algebra on $\mathcal{G}(H^\cdot)_a:=\oplus_{i}H^i_a/H^{i+1}_a$ by $$[X,Y]=\mathcal{L}(X,Y),\quad \text{for}\; X\in H^i_a/H^{i+1}_a,Y\in H^j_a/H^{j+1}_a.$$ By Baker–Campbell–Hausdorff formula, the vector space $\mathcal{G}(H^\cdot)_a$ admits the structure of a nilpotent Lie group. It is clear that the structure of group is $C^\infty$ in $a$, hence $\mathcal{G}(H^\cdot)$ is a bundle of nilpotent Lie groups. We will define a Lie groupoid denoted by $\dnc_{H^\cdot}(M\times M,M)$ by induction on $k$ whose underlying set is equal to $$M\times M\times \R^*\sqcup \mathcal{G}(H^\cdot)\times \{0\}.$$ and whose Lie algebroid is equal to $$\Gamma^\infty(\mathfrak{A}_{H^\cdot})=\{X\in \Gamma^\infty(TM\times \R):\partial_t^iX_{|t=0}\in \Gamma^\infty(H^i)\forall i\geq 0\}$$For $k=1$, this is just $\dnc_{H^1}(M\times M,M)\rightrightarrows M\times \R$ defined in \Cref{sect H bundle 1 case}. By induction assuming it is defined for $k-1$, that is the Lie groupoid \begin{align*}
 \dnc_{H^1,\dots,H^{k-1}}(M\times M,M)&=M\times M\times \R^*\sqcup \mathcal{G}(H^1,\dots,H^{k-1})\times \{0\}\\&=M\times M\times \R^*\sqcup H^1\oplus H^2/H^1\oplus \dots\oplus TM/H^{k-1}\times\{0\}
\end{align*} is well defined. The subset $H^1\oplus H^2/H^1\oplus \cdots \oplus H^k/H^{k-1}$ is a Lie subgroupoid of $\mathcal{G}(H^1,\dots,H^{k-1})$ precisely because $$[\Gamma^\infty(H^i),\Gamma^\infty(H^j)]\subseteq \Gamma^\infty(H^{k}),\quad i+j=k.$$ Therefore the space \begin{align*}
 \dnc(\dnc_{H^1,\dots,H^{k-1}}(M\times M,M)&,H^1\oplus H^2/H^1\oplus \cdots \oplus H^k/H^{k-1}\times \{0\})\\&\rightrightarrows \dnc(M\times \R,M\times \{0\})=M\times \R^2
\end{align*} is a Lie groupoid, where we used \Cref{trivializing dnc Rn}. The Lie algebroid of this groupoid is then $$\dnc(\mathfrak{A}_{H^1,\dots,H^{k-1}},H^1\oplus \cdots \oplus H^k/H^{k-1})$$

Using \Cref{sections of dnc algebr}, we get that the space of sections of this algebroid is then equal to \begin{align*}
\Gamma^\infty(\dnc(&\mathfrak{A}_{H^1,\dots,H^{k-1}},H^1\oplus \cdots \oplus H^k/H^{k-1})\\&=\{X\in \Gamma^\infty(TM\times \R\times \R):\partial^i_t(X)(0,u)\in \Gamma^\infty(H^i)\;\forall 0\leq i\leq k-1,u\in \R\\ &\&\;\partial^{k}_t(X)(0,0)\in \Gamma^\infty(H^k)\}.
\end{align*}

We define $\dnc_{H^1,\cdots,H^k}(M\times M,M)$ as the fiber of $\dnc(\dnc_{H^1,\dots,H^{k-1}}(M\times M,M),H^1\oplus H^2/H^1\oplus \cdots \oplus H^k/H^{k-1}\times \{0\})$ over $M\times \{1\}\times \R$. This is clearly a Lie groupoid.

\bigskip

It follows from the above description of $\Gamma^\infty(\dnc(\mathfrak{A}_{H^1,\dots,H^{k-1}},H^1\oplus \cdots \oplus H^k/H^{k-1}))$ by restricting to the diagonal we get that if $$X\in \Gamma^\infty(\dnc(\mathfrak{A}_{H^1,\dots,H^{k-1}},H^1\oplus \cdots \oplus H^k/H^{k-1}))$$ then $\partial_t^iX(0,0)\in\Gamma^\infty(H^i)$ for all $0\leq i\leq k$, where we used that $X(0,u)=0$. This fnishes the induction, and proves that Lie algebroid  of $\dnc_{H^\cdot}(M\times M,M)$ is equal to $\mathfrak{A}_{H^\cdot}$. Hence we proved the following\begin{theorem}
The Lie groupoid $\dnc_{H^\cdot}(M\times M,M)$ is the same as the groupoid constructed in  \cite{Choi:2015aa,Choi:2017aa,Ponge:2017aa,Erp:2016aa}.
\end{theorem}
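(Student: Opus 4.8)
The plan is to identify the two Lie groupoids through the only data that could distinguish them: their common underlying set, their restriction to the open dense part $\{t\neq 0\}$, and their Lie algebroid. The inductive construction just carried out produces a Lie groupoid on the set $M\times M\times\R^*\sqcup\mathcal{G}(H^\cdot)\times\{0\}$, which restricts to the pair groupoid times $\R^*$ on $\{t\neq 0\}$ and whose Lie algebroid is $\mathfrak{A}_{H^\cdot}$. So it suffices to show, first, that the groupoid of \cite{MR2417549,Choi:2015aa,Choi:2017aa,Ponge:2017aa,Erp:2016aa} carries exactly the same three pieces of data, and second, that these data determine a Lie groupoid uniquely; the identity on the underlying set is then the desired isomorphism of Lie groupoids over $M\times\R$.

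For the first point I would recall how the filtered tangent (Carnot) groupoid is characterised in those works: the smooth structure on $M\times M\times\R^*\sqcup\mathcal{G}(H^\cdot)\times\{0\}$ is defined so that the rescaled inhomogeneous vector fields become smooth sections of the algebroid, i.e.\ a $t$-family $X$ on $M\times\R$ is admissible precisely when $\partial_t^iX|_{t=0}\in\Gamma^\infty(H^i)$ for all $i$. This is exactly the description of $\Gamma^\infty(\mathfrak{A}_{H^\cdot})$ extracted above via \Cref{sections of dnc algebr}, and the underlying sets and $\{t\neq 0\}$-parts agree on the nose, so all three pieces of data match.

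The substance of the argument is the uniqueness step. I would argue that the germ of the smooth structure along the face $\{t=0\}$ is forced by the algebroid together with the groupoid structure: the exponential map of $\mathfrak{A}_{H^\cdot}$ and right translations yield charts near the units, while along each source fibre over a point of the face $M\times\{0\}$ the exponential of the simply connected nilpotent group $\mathcal{G}(H^\cdot)_x$ is a global diffeomorphism; requiring the admissible vector fields to be smooth then leaves no freedom in the transverse directions either. This is the uniqueness principle underlying the constructions of \cite{Choi:2015aa,Choi:2017aa,Ponge:2017aa,Erp:2016aa}, which I would invoke after checking that our hypotheses on the filtration $H^\cdot$ put us squarely in its scope.

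The main obstacle is precisely this uniqueness of the smooth structure: matching the Lie algebroids is immediate from the inductive computation, but equality of the two Lie groupoids as smooth manifolds does not follow from equality of algebroids alone without the deformation (zoom) structure being taken into account. Should one wish to avoid invoking the uniqueness theorem, the alternative is to write explicit local charts for the iterated deformation near $\{t=0\}$, generalising those obtained for the single-subbundle case in \Cref{sect H bundle 1 case}, and to compare them directly with the privileged-coordinate charts of \cite{Choi:2015aa,Choi:2017aa,Ponge:2017aa,Erp:2016aa}; this is more laborious but routine, relying only on the functoriality of $\dnc$ from \Cref{Functoriality of DNC}.
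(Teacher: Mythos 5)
Your proposal is correct and follows essentially the same route as the paper: the paper's proof \emph{is} the inductive computation showing that the Lie algebroid of $\dnc_{H^\cdot}(M\times M,M)$ equals $\mathfrak{A}_{H^\cdot}$, together with the observation that the underlying set and the structure over $t\neq 0$ are the standard ones, from which the identification with the groupoid of the cited references is concluded. The only difference is that you spell out the uniqueness-of-smooth-structure step (exponential coordinates plus simple connectedness of the nilpotent fibers over $t=0$) that the paper leaves implicit, deferring it to the cited works.
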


\begin{rems}\begin{enumerate}
\item In \cite{Erp:2016aa}, a more general case is regarded where starting from a groupoid $G$, subbundles $H^1\subseteq \dots\subseteq H^r=\mathfrak{A}G$ such that $[\Gamma^\infty(H^i),\Gamma^\infty(H^j)]\subseteq \Gamma^\infty(H^{i+j})$ they construct a groupoid $\dnc_{H^\cdot}(G,G^0)$. It is clear that the above construction works equally well for this case with only notational changes. The advantage of our approach is that we can do the more general case of a groupoid inside another without any extra difficulty.
\item The groupoid \begin{align*}
 \dnc^{k+1}(M\times &M,M,H^1\times\{0\},\dots, H^1\oplus\dots \oplus H^{k}/H^{k-1}\times \R^{k-1}\times \{0\})\\&\rightrightarrows \dnc^{k+1}(M,M\times \{0\},\dots,M\times \R^{k-1}\times \{0\})=M\times \R^{k+1}.
\end{align*}is a Lie groupoid which contains the `deformations in all the directions'. This groupoid admits an $\left(\R^{*}\right)^{k+1}$ action as in \Cref{Dnc iterated sect}. The fiber over $(1,\dots,1,0)$ is then equal to $\dnc_{H^1,\dots, H^k}(M\times M,M)$. The action $\R^*$ defined on $\dnc_{H^1,\dots,H^k}(M\times M,M)$ defined in \cite{Erp:2016aa} is then just the diagonal action of $\R^{k+1}$ which by induction is easily seen to preserve the fiber $(1,\dots ,1,0).$

For example, in the case $k=2$, this gives \begin{align*}
 \dnc^3&(M\times M,H^1\times\{0\},H^1\oplus H^2/H^1\times \R)=M\times M\times \R^*\times \R^*\times \R^*\\&\sqcup TM\times \{0\}\times \R^*\times \R^*\sqcup H^1\oplus TM/H^1\times \R\times\{0\}\times \R^*\\&\sqcup H^1\oplus H^2/H^1\oplus TM/H^2\times\R\times\R\times\{0\}
\end{align*}
Let us remark that the subgroupoid $H^1\oplus H^2/H^1\oplus TM/H^2\times\R\times\R\times\{0\}$ is not trivial as a groupoid, it has a structure \begin{align*}
(h_1,h_2,h_3,t,u,0)\cdot (k_1,k_2,k_3,t,u,0)=\bigg(h_1+k_1,h_2+k_2+\frac{t}{2}[h_1,k_1],\\h_3+k_3+\frac{tu}{2}\left([h_1,k_2]+[h_2,k_1]\right)+\frac{t^2u}{12}\left([h_1,[h_1,k_1]]+[k_1,[k_1,h_1]]\right),t,u,0\bigg)
\end{align*}
Similarly for $M\times \{0\}\times \R^*\times \R^*$ and $H^1\oplus TM/H^1\times \R\times\{0\}\times \R^*$.
\item The existence of the Lie groupoid $\dnc_{H^\cdot}(M\times M,M)$ follows from Debord's result on integrability of Lie algebroids \cite{MR1751671}. Debord's result applies to the Lie algebroid of $\dnc_{H^\cdot}(M\times M,M)$. It shows that there exists a unique minimal Lie goupoid integrating the Lie algebroid of $\dnc_{H^\cdot}(M\times M,M)$. Minimal in the sense that any other Lie groupoid projects by a submersion morphism of groupoids into it.
\item The groupoid $\dnc_{H^\cdot}(M\times M,M)$ constructed above is the minimal groupoid integrating its Lie algebroid. The maximal Lie groupoid is the groupoid $$\tilde{M}\times_{\pi_1(M)}\tilde{M}\times\R^*\sqcup H^1\oplus \dots\oplus H^k/H^{k-1}\times\{0\},$$ where $\tilde{M}\times_{\pi_1(M)}\tilde{M}$ is the Poincaré groupoid.
\end{enumerate}
\end{rems}

\begin{examp}\label{examp foliation multipleHeisen}
Following the notation of \Cref{example H foliation single}. Let $F$ be a foliation, $H^1\subseteq\cdots\subseteq H^{k+1}=\nu(F)$ subbundles such that if $X\in \Gamma^\infty(H^i)$ and $Y\in \Gamma^\infty(H^j)$, then $$[X,Y]\in \Gamma^\infty(H^{i+j}).$$ with the convention $H^{s}=\nu(F)$ for $s>k$ and such that if $i\in\{1,\dots,k\}$, $\gamma:[0,1]\to M$ a path tangent to the leaves, then $d\gamma H^i_{\gamma(0)}=H^i_{\gamma(1)}$. In \Cref{example H foliation single}, we defined the groupoid $\dnc_{L^1}(M\times M,\mathcal{G}(M,F))$. We can by an induction, similar to the above, construct the groupoid $\dnc_{H^\cdot}(M\times M,\mathcal{G}(M,F))$.
 \end{examp}
\paragraph{Quotient of Lie groupoids}\label{para dfn quotient of Lie groupoids} Let $G\rightrightarrows G^0$ be a Lie groupoid, $H\subseteq G$ a Lie subgroupoid. The Lie groupoid $H$ acts on the smooth manifold $G_{H^0}$ by right translation. This action is clearly free. The action is proper if $H$ is closed in the pullback of $G$ by $H^0\subseteq G^0$. In this case, by \cite[section 5.9.5]{MR0219078}, the quotient space $G_{H^0}/H$ is a smooth manifold, that will be denoted by $G/H$.

\begin{examp}\begin{enumerate} \label{examp quotient dnc}\label{examp Higson}
\item If $V$ is a submanifold of $M$, then $\dnc(V\times V,V)$ is a Lie subgroupoid of $\dnc(M\times M,M)$. It is clear that the quotient space is equal to $$\dnc(M\times M,M)/\dnc(V\times V,V)=\dnc(M,V).$$

 \item Let $V\subseteq M$ a smooth submanifold such that $H^i\cap TV$ is of locally of finite rank. It is then clear that $[\Gamma^\infty(H^i\cap TV),\Gamma^\infty(H^j\cap TV)]\subseteq \Gamma^\infty(H^{i+j}\cap TV)$. Let $G(H^\cdot)$ the bundle of nilpotent Lie groups $\oplus_{i}Hî/H^{i-1}$, $G(H^\cdot\cap TV)$ be the bundle of nilpotent Lie groups $\oplus (H^i\cap TV)/(H^{i-1}\cap TV).$ In \cite{Sadegh:2016aa}, the authors define a smooth manifold whose underlying set is equal to $M\times \R^*\sqcup G(H^\cdot)_{|V}/G(H^\cdot\cap TV),$ where $G(H^\cdot)_{|V}$ is the restriction of $G(H^\cdot)$ to $V$. Similarly to the description of the classical deformation to the normal as a quotient space, the space defined in \cite{Sadegh:2016aa} can also be written as $\dnc_{H^\cdot}(M\times M,M)/\dnc_{H^\cdot\cap TV}(V\times V,V)$.
\end{enumerate}  \end{examp}
\bibliographystyle{plain}

\begin{thebibliography}{}

\end{thebibliography}


\begin{thebibliography}{10}

\bibitem{MR3261009}
Paul~F. Baum and Erik van Erp.
\newblock {$K$}-homology and index theory on contact manifolds.
\newblock {\em Acta Math.}, 213(1):1--48, 2014.

\bibitem{MR953082}
Richard Beals and Peter Greiner.
\newblock {\em Calculus on {H}eisenberg manifolds}, volume 119 of {\em Annals
  of Mathematics Studies}.
\newblock Princeton University Press, Princeton, NJ, 1988.

\bibitem{MR0219078}
N.~Bourbaki.
\newblock {\em {\'E}l{\'e}ments de math{\'e}matique. {F}asc. {XXXIII}.
  {V}ari{\'e}t{\'e}s diff{\'e}rentielles et analytiques. {F}ascicule de
  r{\'e}sultats ({P}aragraphes 1 {\`a} 7)}.
\newblock Actualit{\'e}s Scientifiques et Industrielles, No. 1333. Hermann,
  Paris, 1967.

\bibitem{MR0370271}
Louis Boutet~de Monvel.
\newblock Hypoelliptic operators with double characteristics and related
  pseudo-differential operators.
\newblock {\em Comm. Pure Appl. Math.}, 27:585--639, 1974.

\bibitem{MR0493005}
Louis Boutet~de Monvel, Alain Grigis, and Bernard Helffer.
\newblock Parametrixes d'op{\'e}rateurs pseudo-diff{\'e}rentiels {\`a}
  caract{\'e}ristiques multiples.
\newblock pages 93--121. Ast{\'e}risque, No. 34--35, 1976.

\bibitem{Choi:2015aa}
Woocheol Choi and Raphael Ponge.
\newblock Tangent maps and tangent groupoid for {C}arnot manifolds.
\newblock 10 2015.

\bibitem{Ponge:2017aa}
Woocheol Choi and Raphael Ponge.
\newblock Privileged coordinates and nilpotent approximation for {C}arnot
  manifolds, {II}. {C}arnot coordinates.
\newblock 03 2017.

\bibitem{Choi:2017aa}
Woocheol Choi and Raphael Ponge.
\newblock Privileged coordinates and nilpotent approximation of {C}arnot
  manifolds, {I}. general results.
\newblock 09 2017.

\bibitem{MR1185817}
Michael Christ, Daryl Geller, Pawe\l{} G\l{}owacki, and Larry Polin.
\newblock Pseudodifferential operators on groups with dilations.
\newblock {\em Duke Math. J.}, 68(1):31--65, 1992.

\bibitem{MR1334867}
A.~Connes and H.~Moscovici.
\newblock The local index formula in noncommutative geometry.
\newblock {\em Geom. Funct. Anal.}, 5(2):174--243, 1995.

\bibitem{MR1657389}
A.~Connes and H.~Moscovici.
\newblock Hopf algebras, cyclic cohomology and the transverse index theorem.
\newblock {\em Comm. Math. Phys.}, 198(1):199--246, 1998.

\bibitem{MR1303779}
Alain Connes.
\newblock {\em Noncommutative geometry}.
\newblock Academic Press, Inc., San Diego, CA, 1994.

\bibitem{MR973272}
Thomas~E. Cummins.
\newblock A pseudodifferential calculus associated to {$3$}-step nilpotent
  groups.
\newblock {\em Comm. Partial Differential Equations}, 14(1):129--171, 1989.

\bibitem{MR1751671}
Claire Debord.
\newblock Groupo\"\i des d'holonomie de feuilletages singuliers.
\newblock {\em C. R. Acad. Sci. Paris S{\'e}r. I Math.}, 330(5):361--364, 2000.

\bibitem{MR3187645}
Claire Debord and Georges Skandalis.
\newblock Adiabatic groupoid, crossed product by {$\Bbb{R}_+^\ast$} and
  pseudodifferential calculus.
\newblock {\em Adv. Math.}, 257:66--91, 2014.

\bibitem{Debord:2017aa}
Claire Debord and Georges Skandalis.
\newblock Blowup constructions for {L}ie groupoids and a {B}outet de {M}onvel
  type calculus.
\newblock 05 2017.

\bibitem{MR660648}
A.~Dynin.
\newblock Pseudodifferential operators on {H}eisenberg groups.
\newblock In {\em Pseudodifferential operator with applications ({B}ressanone,
  1977)}, pages 5--18. Liguori, Naples, 1978.

\bibitem{MR0423432}
A.~S. Dynin.
\newblock An algebra of pseudodifferential operators on the {H}eisenberg
  groups. {S}ymbolic calculus.
\newblock {\em Dokl. Akad. Nauk SSSR}, 227(4):792--795, 1976.

\bibitem{MR0367477}
G.~B. Folland and E.~M. Stein.
\newblock Estimates for the {$\bar \partial _{b}$} complex and analysis on the
  {H}eisenberg group.
\newblock {\em Comm. Pure Appl. Math.}, 27:429--522, 1974.

\bibitem{MR0344699}
G.~B. Folland and E.~M. Stein.
\newblock Parametrices and estimates for the {$\bar \partial _{b}$} complex on
  strongly pseudoconvex boundaries.
\newblock {\em Bull. Amer. Math. Soc.}, 80:253--258, 1974.

\bibitem{MR0442149}
Roe~W. Goodman.
\newblock {\em Nilpotent {L}ie groups: structure and applications to analysis}.
\newblock Lecture Notes in Mathematics, Vol. 562. Springer-Verlag, Berlin-New
  York, 1976.

\bibitem{MR925720}
Michel Hilsum and Georges Skandalis.
\newblock Morphismes {$K$}-orient{\'e}s d'espaces de feuilles et
  fonctorialit{\'e} en th{\'e}orie de {K}asparov (d'apr{\`e}s une conjecture
  d'{A}. {C}onnes).
\newblock {\em Ann. Sci. {\'E}cole Norm. Sup. (4)}, 20(3):325--390, 1987.

\bibitem{MR0222474}
Lars H{\"o}rmander.
\newblock Hypoelliptic second order differential equations.
\newblock {\em Acta Math.}, 119:147--171, 1967.

\bibitem{MR1332908}
Pierre Julg and Gennadi Kasparov.
\newblock Operator {$K$}-theory for the group {${\rm SU}(n,1)$}.
\newblock {\em J. Reine Angew. Math.}, 463:99--152, 1995.

\bibitem{MR3694098}
Pierre Julg and Erik van Erp.
\newblock The geometry of the osculating nilpotent group structures of the
  {H}eisenberg calculus.
\newblock {\em J. Lie Theory}, 28(1):107--138, 2018.

\bibitem{MR896907}
K.~Mackenzie.
\newblock {\em Lie groupoids and {L}ie algebroids in differential geometry},
  volume 124 of {\em London Mathematical Society Lecture Note Series}.
\newblock Cambridge University Press, Cambridge, 1987.

\bibitem{MR2247877}
Rapha{\"e}l Ponge.
\newblock The tangent groupoid of a {H}eisenberg manifold.
\newblock {\em Pacific J. Math.}, 227(1):151--175, 2006.

\bibitem{MR2417549}
Rapha{\"e}l~S. Ponge.
\newblock Heisenberg calculus and spectral theory of hypoelliptic operators on
  {H}eisenberg manifolds.
\newblock {\em Mem. Amer. Math. Soc.}, 194(906):viii+ 134, 2008.

\bibitem{MR941624}
Jean Pradines.
\newblock Remarque sur le groupo\"\i de cotangent de {W}einstein-{D}azord.
\newblock {\em C. R. Acad. Sci. Paris S{\'e}r. I Math.}, 306(13):557--560,
  1988.

\bibitem{MR0436223}
Linda~Preiss Rothschild and E.~M. Stein.
\newblock Hypoelliptic differential operators and nilpotent groups.
\newblock {\em Acta Math.}, 137(3-4):247--320, 1976.

\bibitem{MR1267892}
Michel Rumin.
\newblock Formes diff{\'e}rentielles sur les vari{\'e}t{\'e}s de contact.
\newblock {\em J. Differential Geom.}, 39(2):281--330, 1994.

\bibitem{Sadegh:2016aa}
Ahmad Reza Haj~Saeedi Sadegh and Nigel Higson.
\newblock Euler-like vector fields, deformation spaces and manifolds with
  filtered structure.
\newblock 11 2016.

\bibitem{MR764508}
Michael~E. Taylor.
\newblock Noncommutative microlocal analysis. {I}.
\newblock {\em Mem. Amer. Math. Soc.}, 52(313):iv+182, 1984.

\bibitem{MR2680395}
Erik van Erp.
\newblock The {A}tiyah-{S}inger index formula for subelliptic operators on
  contact manifolds. {P}art {I}.
\newblock {\em Ann. of Math. (2)}, 171(3):1647--1681, 2010.

\bibitem{MR2680396}
Erik van Erp.
\newblock The {A}tiyah-{S}inger index formula for subelliptic operators on
  contact manifolds. {P}art {II}.
\newblock {\em Ann. of Math. (2)}, 171(3):1683--1706, 2010.

\bibitem{MR2746652}
Erik van Erp.
\newblock The index of hypoelliptic operators on foliated manifolds.
\newblock {\em J. Noncommut. Geom.}, 5(1):107--124, 2011.

\bibitem{Erp:2015aa}
Erik van Erp and Robert Yuncken.
\newblock A groupoid approach to pseudodifferential operators.
\newblock{\em J. Reine Angew. Math. 756 (2019), 151–182}

\bibitem{Erp:2016aa}
Erik van Erp and Robert Yuncken.
\newblock On the tangent groupoid of a filtered manifold.
\newblock {\em Bull. Lond. Math. Soc.}, 49(6):1000--1012, 2017.

\bibitem{PCaro}
P. Carrillo Rouse.
\newblock A Schwartz type algebra for the tangent groupoid.
\newblock {\em In K-theory and noncommutative geometry, EMS Ser. Congr. Rep., pages 181–199. Eur. Math. Soc., Zurich, 2008.}

\end{thebibliography}

\end{document}